\newfont{\msam}{msam10}
\newtheorem{theorem}[]{Theorem}
\newtheorem{proposition}[]{Proposition}
\newtheorem{corollary}[]{Corollary}
\newtheorem{lemma}[]{Lemma}
\theoremstyle{definition}
\newtheorem{definition}[]{Definition}
\newtheorem{remark}[]{Remark}
\newtheorem{defn}[theorem]{Definition}
\newtheorem{example}[]{Example}
\newtheorem{conj}[]{Conjecture}
\let\nc\newcommand
\DeclareMathOperator*{\Motimes}{\text{\raisebox{0.25ex}{\scalebox{0.8}{$\bigotimes$}}}}
\DeclareMathOperator*{\Moplus}{\text{\raisebox{0.25ex}{\scalebox{0.8}{$\bigoplus$}}}}
\def\bthm{\begin{theorem}}
\def\ethm{\end{theorem}}
\def\blemma{\begin{lemma}}
\def\elemma{\end{lemma}}
\def\bproof{\begin{proof}}
\def\eproof{\end{proof}}
\def\bprop{\begin{proposition}}
\def\eprop{\end{proposition}}
\def\bcor{\begin{corollary}}
\def\ecor{\end{corollary}}
\def\bconj{\begin{conj}}
\def\econj{\end{conj}}
\nc{\la}{\label}
\def\O{\mathcal{O}}
\def\Z{\mathbb{Z}}
\def\Q{\mathbb{Q}}
\def\M{\mathcal{M}}
\def\L {\boldsymbol{L}}
\def\bLambda{{\Lambda}}
\def\Top{\mathrm{Top}}
\def\sSet{\mathrm{sSet}}
\def\Mod{\mathrm{Mod}}
\def\sGr{\mathrm{sGr}}
\def\Gr{\mathrm{Gr}}
\def\Com{\mathtt{Com}}
\def\sVect{\mathtt{sVect}}
\def\Alg{\mathrm{Alg}}
\def\sAlg{\mathrm{sAlg}}
\def\DGL{\mathrm{dgLie}}
\def\Comm{\mathrm{Comm}}
\def\sComm{\mathrm{sComm}}
\def\scAlg{\mathrm{sComm}}
\def\DGA{\mathrm{dgAlg}}
\def\DGCA{\mathrm{dgComm}}
\def\cDGA{\mathrm{dgComm}}
\def\D{\mathcal{D}}
\def\C{\mathcal{C}}
\def\M{\mathcal{M}}
\def\cN{\mathcal{N}}
\def\Ho{{\mathtt{Ho}}}
\def\mfa{\mathfrak{a}}
\nc{\CS}{{\tt{CS}}}
\nc{\CR}{{\tt{CR}}}
\nc{\SR}{{\tt{SR}}}
\nc{\ocolim}{{\rm ocolim}}
\nc{\Ob}{{\rm Ob}}
\nc{\Hom}{{\rm{Hom}}}
\nc{\Homcont}{{\mathcal{H}om}}
\nc{\HOM}{\underline{\rm{Hom}}}
\nc{\DER}{\underline{\rm{Der}}}
\nc{\END}{\underline{\rm{End}}}
\nc{\bSym}{\mathbf{Sym}}
\nc{\Ext}{{\rm{Ext}}}
\nc{\Map}{{\rm{Map}}}
\nc{\Rep}{{\rm{Rep}}}
\nc{\DRep}{{\rm{DRep}}}
\nc{\ODRep}{{\mathcal O}{\rm{DRep}}}
\nc{\NCRep}{\widetilde{\rm{Rep}}}
\nc{\RAct}{{\rm{RAct}}}
\nc{\bs}{\backslash}
\nc{\ob}{{\tt{Obs}}}
\nc{\CE}{\mathcal{C}}
\nc{\TP}{{T\!P}}
\nc{\un}{\underline{n}}
\nc{\um}{\underline{m}}
\nc{\rn}{\langle n \rangle}
\nc{\nn}{{{\natural} {\natural}}}
\nc{\n}{{{\natural}}}
\nc{\A}{\mathbb A}
\nc{\B}{{\mathrm{B}}}
\nc{\Ba}{\overline{\mathrm{B}}}
\nc{\bC}{\overline{C}}
\nc{\bOmega}{\boldsymbol{\Omega}}
\nc{\bB}{\boldsymbol{B}}
\nc{\EXT}{\underline{\rm{Ext}}}
\nc{\TOR}{\underline{\rm{Tor}}}
\nc{\hocolim}{\mathrm{hocolim}}
\def\H{\mathrm H}
\def\HC{\mathrm{HC}}
\def\HS{\mathrm{HS}}
\def\HR{\mathrm{HR}}
\nc{\End}{{\rm{End}}}
\nc{\GL}{{\rm{GL}}}
\nc{\gl}{{\mathfrak{gl}}}
\nc{\rgl}{\overline{{\mathfrak{gl}}}}
\nc{\g}{{\mathfrak{g}}}
\nc{\h}{{\mathfrak{h}}}
\nc{\PGL}{{\rm{PGL}}}
\nc{\SL}{{\rm{SL}}}
\nc{\sll}{\mathfrak{sl}}
\nc{\cn}{ \mbox{\rm c\^{o}ne} }
\nc{\PSL}{{\rm{PSL}}}
\nc{\ad}{{\rm{ad}}}
\nc{\Ad}{{\rm{Ad}}}
\nc{\dlim}{\varinjlim}
\nc{\plim}{\varprojlim}
\nc{\colim}{{{\rm colim}}}
\newcommand{\HH}{{\rm{HH}}}
\newcommand{\Tor}{{\rm{Tor}}}
\newcommand{\Sym}{S}
\newcommand{\Tr}{{\rm{Tr}}}
\newcommand{\into}{\,\hookrightarrow\,}
\newcommand{\sonto}{\,\stackrel{\sim}{\twoheadrightarrow}\,}
\def\cb{\Omega}
\def\bs{\backslash}
\def\ffgr{\mathfrak{G}}
\def\lgr{\mathbb{G}}
\def\cL{\mathcal{L}}
\def\sLie{\mathtt{sLie}}
\nc{\env}{\mathrm{End}(V)}
\nc{\cC}{\mathcal{C}}
\numberwithin{equation}{section}
\numberwithin{theorem}{section}
\numberwithin{lemma}{section}
\numberwithin{proposition}{section}
\numberwithin{definition}{section}
\numberwithin{corollary}{section}
\numberwithin{example}{section}
\numberwithin{remark}{section}
\def\cC{\mathcal{C}}
\newcommand{\sym}{\operatorname{sym}}
\newcommand{\rH}{\overline{\mathrm{H}}}
\def\cO{\mathcal O}
\def\fM{\mathfrak{M}}
\newcommand{\sMod}{\mathrm{sMod}}
\def\bdf{\begin{defn}}
\def\edf{\end{defn}}
\def\brm{\begin{remark}}
\def\erm{\end{remark}}
\theoremstyle{definition}
\def\bdf{\begin{definition}}
\def\edf{\end{definition}}
\newcommand{\SP}{\mathrm{SP}}
\newcommand{\bS}{{\mathbb S}}
\newcommand{\bG}{{\mathbb G}}
\newcommand{\bF}{{\mathbb F}}
\newcommand{\cA}{{\mathcal A}}
\newcommand{\cH}{{\mathcal H}}
\def\arbreBA{\vcenter{\xymatrix@R=2pt@C=2pt{
&&&&\\
&&&*{}\ar@{-}[ul] & \\
&&*{}\ar@{-}[uurr] \ar@{-}[uull] \ar@{-}[d]     &&\\
&&&&
}}}
\def\arbreAB{\vcenter{\xymatrix@R=2pt@C=2pt{
&&&&\\
&*{}\ar@{-}[ur] &&& \\
&&*{}\ar@{-}[uurr] \ar@{-}[uull] \ar@{-}[d]     &&\\
&&&&
}}}
\def\arbreABC{\vcenter{\xymatrix@R=1pt@C=1pt{
&&&&&&\\
&*{}\ar@{-}[ur] &&&&& \\
&&*{}\ar@{-}[uurr] &&&&\\
&&&*{}\ar@{-}[uuurrr] \ar@{-}[uuulll] \ar@{-}[d] &&&\\
&&&&&&
}}}
\def\arbreBAC{\vcenter{\xymatrix@R=1pt@C=1pt{
&&&&&&\\
&&&*{}\ar@{-}[ul] &&& \\
&&*{}\ar@{-}[uurr] &&&&\\
&&&*{}\ar@{-}[uuurrr] \ar@{-}[uuulll] \ar@{-}[d] &&&\\
&&&&&&
}}}
\def\arbreACB{\vcenter{\xymatrix@R=1pt@C=1pt{
&&&&&&\\
&*{}\ar@{-}[ur] &&&&& \\
&&&&*{}\ar@{-}[uull] &&\\
&&&*{}\ar@{-}[uuurrr] \ar@{-}[uuulll] \ar@{-}[d] &&&\\
&&&&&&
}}}
\def\arbreBCA{\vcenter{\xymatrix@R=1pt@C=1pt{
&&&&&&\\
&&&&&*{}\ar@{-}[ul] & \\
&&*{}\ar@{-}[uurr] &&&&\\
&&&*{}\ar@{-}[uuurrr] \ar@{-}[uuulll] \ar@{-}[d] &&&\\
&&&&&&
}}}
\def\arbreCAB{\vcenter{\xymatrix@R=1pt@C=1pt{
&&&&&&\\
&&&*{}\ar@{-}[ur] &&& \\
&&&&*{}\ar@{-}[uull] &&\\
&&&*{}\ar@{-}[uuurrr] \ar@{-}[uuulll] \ar@{-}[d] &&&\\
&&&&&&
}}}
\def\arbreCBA{\vcenter{\xymatrix@R=1pt@C=1pt{
&&&&&&\\
&&&&&*{}\ar@{-}[ul] & \\
&&&&*{}\ar@{-}[uull] &&\\
&&&*{}\ar@{-}[uuurrr] \ar@{-}[uuulll] \ar@{-}[d] &&&\\
&&&&&&
}}}
\def\arbreACA{\vcenter{\xymatrix@R=1pt@C=1pt{
&&&&&&\\
&*{}\ar@{-}[ur] &&&&*{}\ar@{-}[ul] & \\
&&&&&&\\
&&&*{}\ar@{-}[uuurrr] \ar@{-}[uuulll] \ar@{-}[d] &&&\\
&&&&&&
}}}
\begin{document}

\title{Symmetric homology is representation homology}
\author{Yuri Berest}
\address{Department of Mathematics,
Cornell University, Ithaca, NY 14853-4201, USA}
\email{berest@math.cornell.edu}
\author{Ajay C. Ramadoss}
\address{Department of Mathematics,
Indiana University,
Bloomington, IN 47405, USA}
\email{ajcramad@indiana.edu}
\begin{abstract}
Symmetric homology is a natural generalization of cyclic homology, in which symmetric groups play the role of cyclic groups. In the case of associative algebras, the symmetric homology theory was introduced by Z. Fiedorowicz \cite{F} and was further developed in the work of S. Ault \cite{Au1, Au2}. In this paper, we show that, for algebras defined over a field of characteristic $0$, the symmetric homology theory is naturally equivalent to the (one-dimensional) representation homology theory introduced by the authors (jointly with G. Khachatryan) in  \cite{BKR}. Using known results on representation homology, we compute symmetric homology explicitly for basic algebras, such as polynomial algebras and universal enveloping algebras of (DG) Lie algebras. As an application, we prove two conjectures of Ault and Fiedorowicz, including the main conjecture of \cite{AF07} on topological interpretation
of symmetric homology of polynomial algebras.
\end{abstract}
\maketitle
\maketitle


\vspace*{-1ex}

\section{Introduction and Main Results} \la{S1}
Cyclic homology was introduced by A. Connes \cite{C83} (and independently by B. Tsygan \cite{T83}) almost 40 years ago. In Connes' approach, the key idea was to extend the classical notion of a simplicial module to that of a {\it cyclic module}, which he defined as a functor $ \Delta C^{\rm op} \to \Mod_k $ on the so-called cyclic category $ \Delta C $. Connes observed that, for any associative unital $k$-algebra $A$, the standard simplicial module $ B^{\rm Hoch}(A): \Delta^{\rm op} \to \Mod_k $, computing the Hochschild homology of $A$, extends naturally to a cyclic module $ B^{\rm cyc}(A): \Delta C^{\rm op} \to \Mod_k $ called the {\it cyclic bar construction} of $A$. By analogy with the well-known `Tor-formula' for  Hochschild homology:
\begin{equation}
\la{HHA} 
\HH_*(A) \,=\, \Tor^{\Delta^{\rm op}}_{\ast}(k,\,B^{\rm Hoch}A)
\end{equation}
he then defined the {\it cyclic homology of $A$} by 
\begin{equation}
\la{HCA} 
\HC_*(A) \,:=\, \Tor^{\Delta C^{\rm op}}_{\ast}(k,\,B^{\rm cyc}A)\,.
\end{equation}
Now, Connes' cyclic category $ \Delta C $ is an extension of the simplicial category
$ \Delta $ obtained by taking a ``crossed product''  with the family
(groupoid) of cyclic groups $ \{C_n = \Z/(n+1)\Z\}_{n \ge 0}\,$. This idea of a categorical
crossed product was formalized by Loday and Fiedorowicz \cite{FL91} who introduced the notion of a crossed simplicial group category $ \Delta G $ and constructed new examples of such categories for various families of groups $ \{G_n\}_{n \ge 0}$. Apart from $\Delta$ and $ \Delta C$, the most interesting example\footnote{This and other examples of \cite{FL91} also appeared in the work of Feigin-Tsygan \cite{FT87} and Krasauskas \cite{Kr87}.} is the {\it symmetric category} $ \Delta S $ associated to the family of symmetric groups $ \{\Sigma_{n+1}\}_{n \ge 0} $. In the unpublished paper \cite{F}, Fiedorowicz showed that, for any algebra $A$, there is a natural functor
$ B_{\rm sym}(A): \Delta S \to \Mod_k $, called
the {\it symmetric bar construction}\footnote{Note that, unlike Hochschild and cyclic, the symmetric bar construction is defined as a {\it covariant} functor
on the category $ \Delta S$, which we indicate by placing `sym' as a subscript in $B_{\rm sym}$. We recall the precise definition of $ \Delta S $ and the construction of  $B_{\rm sym}(A)\,$ in Section~\ref{S2.3} below.}, and, by analogy with \eqref{HHA} and \eqref{HCA}, he then defined the {\it symmetric homology of $A$} by the formula
\begin{equation}
\la{HSA} 
\HS_*(A) \,:=\, \Tor^{\Delta S}_{\ast}(k,\,B_{\rm sym} A)\,.
\end{equation}
It turns out that, despite its resemblance to \eqref{HHA} and \eqref{HCA}, formula \eqref{HSA} yields a much more complicated homology theory. While the Hochschild and cyclic homology are explicitly known and can be easily calculated for many different types of algebras (see \cite{L}), the symmetric homology $ \HS_*(A) $ is not even known for the polynomial algebras $A = k[x_1,\ldots,x_n]$ in $ n\ge 2 $ variables!
The main problem seems to be that the $ \HS_*$ theory is less accessible to algebraic methods than that of $ \HH_* $ and $\HC_*$, and --- as was already recognized in \cite{F} --- has to be approached through topology. 

In 2007, Ault and Fiedorowicz \cite{AF07} announced a number of general theorems on the structure of $\HS_*(A)$ and an explicit conjecture on polynomial algebras  (see Conjecture~\ref{Con1} below). The proofs of most of the theorems of \cite{AF07} (along with results of the original preprint \cite{F}) appeared in Ault's Ph.D. dissertation \cite{Au1} and his subsequent paper \cite{Au2}. However, the main conjecture of \cite{AF07} seems still to be open:
\begin{conj}[\cite{AF07}, Conjecture 1] 
\la{Con1}
For any $ n \ge 1 $, there is an isomorphism of graded algebras
\begin{equation}\la{Eq1}
{\rm HS}_\ast(k[x_1,\ldots,x_n]) \,\cong\, \H_\ast\bigg(\prod_{i=1}^n \cC_{\infty}(\bS^0) \times \prod_{i=2}^n \, \Omega^{\infty}\Sigma^{\infty}(\bS^{i-1})^{\binom{n}{i}}\, ,\, k\bigg) \ ,
\end{equation}
where $ \Omega^{\infty}\Sigma^{\infty} $ is the stable homotopy functor and
$\cC_\infty $ is the monad associated to the little $\infty$-cubes operad, both defined
as functors on the category of based topological spaces $ \Top_{\ast} $ (see, e.g., \cite{May72}).
\end{conj}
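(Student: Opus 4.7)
The plan is to deduce Conjecture~\ref{Con1} from the main equivalence of this paper --- which identifies symmetric homology with one-dimensional representation homology --- via an explicit Koszul-theoretic computation of $\HR_*(\Sym V,k)$ for $V=k^n$, and then to match the result with the rational homology of the space in \eqref{Eq1}. Since the conjecture is stated in characteristic zero, rational homotopy theory makes the topology side fully computable.

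First, invoking the main equivalence $\HS_*(A) \cong \HR_*(A,k)$ reduces the problem to computing the one-dimensional representation homology of $\Sym V$. As $\Sym V$ is Koszul with Koszul dual coalgebra $\Lambda^c V$ --- the graded exterior coalgebra, with $\Lambda^i V$ placed in internal degree $i$ --- the cobar construction provides a cofibrant DGA resolution $\Omega(\Lambda^c V) \xrightarrow{\sim} \Sym V$ whose underlying graded algebra is the free algebra $T\bigl(\bigoplus_{i\ge 1}\Lambda^i V[i-1]\bigr)$. The cobar differential on a generator $s^{-1}(v_1\wedge\cdots\wedge v_n)$ is, up to sign, the reduced coproduct of the Koszul dual coalgebra; since that coalgebra is graded cocommutative, this differential is a sum of graded commutators in the tensor algebra. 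Passing to the abelianization $\Sym\bigl(\bigoplus_{i\ge 1}\Lambda^i V[i-1]\bigr)$, those commutators vanish (by a direct sign computation, valid in characteristic zero), so the induced differential is zero. Hence
\[
\HR_*(\Sym V,k)\;\cong\;\Sym\!\Bigl(\,\bigoplus_{i\ge 1}\Lambda^i V\,[i-1]\,\Bigr)
\]
as graded commutative algebras --- polynomial on the even-degree generators and exterior on the odd-degree ones.

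Second, on the topological side, $\cC_\infty(\bS^0) \simeq \coprod_{m\ge 0} B\Sigma_m$ has rational homology $k[t]$ concentrated in degree zero (with the Pontryagin product realizing the monoid product on $\pi_0=\N$), while for $j\ge 1$ the rational equivalence $\Omega^\infty\Sigma^\infty(\bS^{j})\simeq_{\Q} K(\Q,j)$ gives rational homology equal to a free graded commutative algebra on one generator of degree $j$. By the K\"unneth formula, the rational homology of the right-hand side of \eqref{Eq1} is a free graded commutative algebra with $n$ generators in degree $0$ and $\binom{n}{i}$ generators in degree $i-1$ for each $i=2,\ldots,n$. Since $\dim\Lambda^i V=\binom{n}{i}$, this matches the outcome of the previous step on the nose, and the desired isomorphism of graded algebras follows from the naturality of the main equivalence.

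The main obstacle I expect is the sign bookkeeping in the Koszul-duality computation of the cobar differential and in its vanishing under abelianization, together with the need to propagate the algebra structure through the comparison map $\HS_*\cong\HR_*$. In particular, the conjectured isomorphism is one of \emph{graded algebras}, and this will hinge on the main theorem being an algebra isomorphism --- matching the Pontryagin product on the topology side with the natural commutative multiplication on $\DRep_1$.
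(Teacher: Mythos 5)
Your proposal is correct and follows essentially the same route as the paper: invoke $\HS_*(A)\cong\HR_*(A,k)$ (Theorem~\ref{T1a} / Proposition~\ref{tA}), compute $\HR_*(\Sym V,k)$ via the Koszul-dual cobar resolution whose quadratic differential lands in commutators and hence dies under abelianization, and match this with the rational homology of the target space using $\cC_\infty(\bS^0)\simeq\coprod_m B\Sigma_m$ and $\Omega^\infty\Sigma^\infty(\bS^{j})\simeq_{\Q} K(\Q,j)$ together with K\"unneth. The paper merely phrases the algebraic step more generally (Theorem~\ref{hsua}, Koszul duality between $U\mfa$ and the Chevalley--Eilenberg coalgebra), then specializes to the abelian Lie algebra $\mfa=V$; your direct computation with $\Lambda^c V$ is that specialization.
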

Another conjecture of \cite{AF07} that apparently remains unproven\footnote{
Conjecture~\ref{Con2} is stated in \cite{AF07} as Theorem~8. However, 
as remarked in \cite{Au2}, this theorem remains unproven; in fact, the author of \cite{Au2} suspects that it is false in general (see {\it loc. cit.}, Remark 3.3).
We will show, perhaps unsurprisingly, that Conjecture~\ref{Con2} does hold if 
$k$ is a field of characteristic $0$ (see Corollary~\ref{bdta}).} provides a topological formula for the symmetric homology of an arbitrary algebra:
\begin{conj}[\cite{AF07}] 
\la{Con2}
For any $k$-algebra $A$, there is an isomorphism
\begin{equation}\la{Eq2}
{\rm HS}_\ast(A) \,\cong \,\H_\ast(B(D,T,A),\,k)\ ,
\end{equation}
where $B(D,T,A)$ stands for the two-sided bar construction associated to the monad $D$ of 
an $E_{\infty}$-operad in the category of simplicial $k$-modules corresponding to the classical Barratt-Eccles operad \cite{BE74} and $T$ is the tensor algebra functor on $k$-modules.
\end{conj}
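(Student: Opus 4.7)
The plan is to deduce this conjecture (in characteristic zero) directly from the main theorem of the paper, which identifies $\HS_*(A)$ with the one-dimensional representation homology $\HR_*(A, k)$ of \cite{BKR}. Once this identification is in hand, the statement of Conjecture~\ref{Con2} reduces to showing that $\H_*(B(D, T, A), k) \cong \HR_*(A, k)$ whenever $\mathrm{char}\, k = 0$.

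Recall from \cite{BKR} that $\HR_*(A, k) = \pi_*(\tilde{A}_{\mathrm{ab}})$, where $\tilde{A}$ is a cofibrant resolution of $A$ in the model category of simplicial associative $k$-algebras and $(-)_{\mathrm{ab}}$ denotes the abelianization functor, i.e.\ the left adjoint to the inclusion of simplicial commutative $k$-algebras into simplicial associative $k$-algebras. Equivalently, $\HR_*(A, k)$ is the homotopy of any cofibrant simplicial commutative-algebra model for the derived pushout $\mathrm{Sym} \otimes^{\L}_{T}\! A$ along the canonical quotient of monads $T \twoheadrightarrow \mathrm{Sym}$, where $T$ and $\mathrm{Sym}$ denote the free associative and free commutative algebra monads on $k$-modules, respectively.

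The heart of the argument is to verify that, in characteristic zero, $B(D, T, A)$ is such a model. The Barratt-Eccles operad $E$ is an $E_\infty$-operad in simplicial $k$-modules, and over a field of characteristic zero the augmentation $E \to \Comm$ is a quasi-isomorphism of operads. Consequently, the monad $D$ admits a natural weak equivalence $D \sto \mathrm{Sym}$ onto the symmetric algebra monad, from which one deduces a levelwise weak equivalence of two-sided simplicial bar constructions $B(D, T, A) \sto B(\mathrm{Sym}, T, A)$. The right-hand side is, by construction, a simplicial free commutative algebra resolution whose homotopy computes the derived pushout $\mathrm{Sym} \otimes^{\L}_{T}\! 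A$, and hence $\HR_*(A, k)$.

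The main obstacle is the homotopical bookkeeping in the last step: justifying that the levelwise quasi-isomorphism of monads $D \to \mathrm{Sym}$ really induces a weak equivalence of the corresponding two-sided simplicial bar constructions, and that $B(\mathrm{Sym}, T, A)$ genuinely models the derived pushout. This ought to follow either from a direct Reedy cofibrancy check for $B(-, T, A)$ (exploiting the explicit free structure of $D$ and $\mathrm{Sym}$ on each simplicial level), or, more conceptually, from the Hinich/Kriz--May rectification theorem equating the homotopy theories of $E_\infty$-algebras and of strict commutative differential graded algebras over a field of characteristic zero. Once this point is settled, concatenating the two equivalences $\HS_*(A) \cong \HR_*(A, k) \cong \H_*(B(D, T, A), k)$ yields the desired isomorphism \eqref{Eq2}.
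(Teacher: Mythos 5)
Your proposal follows essentially the same route as the paper's proof of Corollary~\ref{bdta}: invoke the main theorem to reduce to showing $\H_*(B(D,T,A)) \cong \HR_*(A,k)$, use characteristic-zero rational homology of symmetric groups to get the weak equivalence $D \to \Sym$ of monads (the paper derives this from the computation in the second proof of Proposition~\ref{pC}, you from the quasi-isomorphism of the Barratt--Eccles operad with $\Comm$, which is the same fact), then pass through $B(\Sym,T,A) \cong B_\ast(T,T,A)_{\rm ab}$ and the observation that $B_\ast(T,T,A)$ is a semi-free simplicial resolution of $A$. The one point you flag as an obstacle --- that the levelwise homology isomorphism of bar constructions induces a homology isomorphism on realizations --- is handled in the paper without comment but is indeed the small homotopical lemma required; your Reedy-cofibrancy or Hinich/Kriz--May rectification suggestions are both adequate ways to close it.
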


The goal of the present paper is to clarify the situation with symmetric homology ---
in particular, to prove Conjecture~\ref{Con1} and Conjecture~\ref{Con2} stated above ---
in the rational case: i.e., for algebras $A$ defined over a filed $k$ of characteristic $0$.
Our approach is based on Theorem~\ref{T1} that provides a natural identification of symmetric homology with (one-dimensional) {\it representation homology}\, 
defined in \cite{BKR} and studied in our subsequent papers (see, e.g., \cite{BR, BFPRW, BRYI, BRYII, BR22}). Representation homology originates from derived algebraic geometry; however, it can be defined algebraically, in a straightforward way, using only classical homotopical algebra \cite{Q1}. The advantage of this algebraic approach is that it makes the relation to cyclic homology quite natural. In the rest of the Introduction, we briefly review the definition of
representation homology and its relation to cyclic homology, following \cite{BKR}, and then state our main theorems. Unlike in \cite{BKR}, however, we will work with simplicial (rather than DG) algebras over an arbitrary commutative ring.

Let $ \Alg_k $ and $ \Comm_k $ denote the categories of associative and commutative algebras (with $1$), both defined over a fixed commutative ring $k$. 
For an integer $n \geqslant 1$, consider the matrix functor $\,M_n:\, \Comm_k \to \Alg_k\,$ that takes a commutative $k$-algebra $C$ to the $k$-algebra $M_n(C)$ of $ (n\times n)$-matrices with entries in $C$. It is classically known and easy to prove (see, e.g., \cite{Co79} and \cite[Prop. 2.1]{BKR} for a general result) that $M_n$ has a left adjoint
\begin{equation}
\la{adj1}
(\,\mbox{--}\,)_n:\ \Alg_k \to \Comm_k
\end{equation}
called the {\it $n$-dimensional representation functor} on $ \Alg_k $. This terminology is motivated by the fact that for any algebra $A$, $\,(A)_n = \O[\Rep_n(A)] \,$ is the coordinate ring of the affine
moduli scheme $ \Rep_n(A) $ parametrizing the $n$-dimensional $k$-linear representations of $A$. Now, to define representation homology we simply `derive' the functor \eqref{adj1},
following the standard procedure in homotopical algebra: we prolong the adjunction $\,(\,\mbox{--}\,)_n: \Alg_k \rightleftarrows \Comm_k: M_n\,$ to simplicial algebras, using degreewise extension:
\begin{equation}
\la{adj2}
(\,\mbox{--}\,)_n:\ \sAlg_k \,\rightleftarrows\, \sComm_k\ : M_n
\end{equation}
and then replace the adjoint functors \eqref{adj2} with their universal homotopy approximations represented by derived functors. The key point here
(observed in \cite{BKR} in the case of DG algebras) is that \eqref{adj2} is a Quillen adjunction with respect to the standard projective model structures on simplicial 
algebras\footnote{This follows from the fact that the underlying simplicial set of $M_n(C)$ for any $ C \in \sComm_k $ is simply the product of $n^2$ copies of $C$, and hence the matrix functor preserves both weak equivalences and fibrations, i.e. it is a right
Quillen functor on simplicial algebras (see, e.g., \cite{GS07}).}. Hence the derived functors of \eqref{adj2} exist and are well behaved: in particular, by Quillen's Adjunction Theorem, they form an adjoint pair at the level homotopy categories:
$$
\L(\,\mbox{--}\,)_n:\ \Ho(\sAlg_k) \,\rightleftarrows\, \Ho(\sComm_k)\, : M_n\ . 
$$
For a (simplicial) $k$-algebra $ A $, we can now define its {\it $n$-dimensional representation homology} by
\begin{equation}
\la{RHDefAlg} 
\HR_*(A, k^n)\, := \, \L_\ast(A)_n\,=\, \pi_\ast[(QA)_n]\,,
\end{equation}
where $\, QA \sonto A\,$ is a cofibrant simplicial resolution
of $A$. By definition, $ \HR_\ast(A, k^n) $ is a graded commutative $k$-algebra, with $\, \HR_0(A, k^n) \cong (A)_n\,$, that does not depend on the choice of the
resolution $QA$. For $ n = 1 $, the matrix functor is just the inclusion $ \sComm_k \into \sAlg_k $, and hence its left adjoint --- the $1$-dimensional derived representation functor $  \L(\,\mbox{--}\,)_1: \,\Ho(\sAlg_k) \to \Ho(\sComm_k) $ --- coincides with the derived abelianization (`commutativization') of algebras.

Now, to relate representation homology to cyclic homology we recall that 
$\, \HC_0(A) = A/[A,A]\,$ for any $ A \in \Alg_k$, and observe that there is an obvious trace map $\,\Tr_n(A):\ \HC_0(A)\,\to\,(A)_n\,$
%
%
induced by the character of the universal $n$-dimensional representation of $A$. It turns out that this classical character map extends naturally to higher cyclic homology taking values in representation homology: i.e., for any $n\ge 1$, there is a canonical graded
$k$-linear map
\begin{equation}
\la{DTr}
\Tr_n(A)_\ast:\ \HC_\ast(A)\,\to\,\HR_\ast(A, k^n)
\end{equation}
called the {\it derived character map of $n$-dimensional representations of $A$}. 
The map \eqref{DTr} was originally constructed in \cite{BKR} for associative DG algebras
over a field $k$ of characteristic zero. However, working simplicially, we can construct
\eqref{DTr} for $A$ defined over any field (or even an arbitrary commutative ring $k$, provided $A$ is flat over $k$; see Section~\ref{S3.1}). For $n=1$, 
to simplify the notation we denote \eqref{DTr} as
\begin{equation}
\la{Tr*}
\Tr(A)_\ast:\ \HC_\ast(A)\,\to\,\HR_\ast(A, k)
\end{equation}

Next, recall that, for any algebra $A$, there is also a natural map
\begin{equation}
 \la{HCS}
 i^*:\ \HC_\ast(A)\,\to\,\HS_*(A)
\end{equation}
relating cyclic homology to symmetric homology via the inclusion of categories $ i: \Delta C^{\rm op} \into \Delta S $. (The existence of this map has been noticed already in \cite{F} (see, {\it loc. cit.}, Remark 1.4), and it is studied in \cite[Section 12]{Au1}; we will review its construction in Section~\ref{S3.1} below.) Our first observation (see Proposition~\ref{factorTr}) is that the derived character map \eqref{Tr*} factors 
through \eqref{HCS}, inducing a natural linear map that relates symmetric homology to
representation homology:
\begin{equation}
\la{Triso}
\SR(A)_*:\ \HS_\ast(A)\, \to\, \HR_*(A, k) 
\end{equation}
Finally, to state our main theorem we recall a result of Ault (see \cite[Theorem 4.1]{Au2}) showing that the $ \HS_*(A) $ for any $A$ admits natural 
homology operations and a Pontryagin product making it a graded commutative $k$-algebra. 
\begin{theorem}
\la{T1}
For any associative unital algebra $A$ defined over a field $k$ of characteristic $0$,  the map \eqref{Triso} is an isomorphism of graded commutative algebras: 
\begin{equation}
\la{Iso}
\HS_\ast(A)\, \cong\, \HR_*(A, k) 
\end{equation}
\end{theorem}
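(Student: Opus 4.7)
The proof plan is to realize $\SR(A)_\ast$ as a chain-level map between explicit bar-construction models of the two sides and then to exploit the formality of $E_\infty$-operads in characteristic zero.

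First, I would identify $\HS_\ast(A)$ with the homology of a two-sided bar construction $B(D,T,A)$, where $T$ is the tensor algebra (free associative algebra) monad and $D$ is the monad associated to the Barratt-Eccles $E_\infty$-operad. This is essentially Conjecture~\ref{Con2}; for the present theorem only the characteristic-zero version is needed, and the excerpt indicates this will be established along the way (as Corollary~\ref{bdta}). The identification comes from comparing $\Tor^{\Delta S}_\ast(k, B_{\rm sym}A)$ with a categorical bar construction on $\Delta S$: morphisms in $\Delta S$ encode the structure maps of an $E_\infty$-operad acting on the tensor powers $A^{\otimes (n+1)}$, and resolving $k$ by the standard simplicial resolution over $\Delta S$ yields the desired $E_\infty$ bar model.

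Second, since $\mathrm{char}(k)=0$, the canonical augmentation $D \to \Comm$ is a quasi-isomorphism of monads on simplicial $k$-modules, so passing to the associated bar constructions gives a chain quasi-isomorphism
\[
B(D,T,A) \,\xrightarrow{\sim}\, B(\Sym,T,A).
\]
Now $B(\Sym,T,A)$ is precisely the abelianization of the comonad cofibrant resolution $T^{\bullet+1}(A) \xrightarrow{\sim} A$ in $\sAlg_k$ associated to the adjunction $T \dashv U$; hence its homotopy groups compute $\L(\,-\,)_1(A) = \HR_\ast(A,k)$.

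Finally, I would verify that the composite quasi-isomorphism agrees with $\SR(A)_\ast$ and respects products. By construction, $\SR(A)_\ast$ is characterized by its compatibility with the derived character $\Tr(A)_\ast$ and with the natural map $i^\ast:\HC_\ast(A) \to \HS_\ast(A)$; both of these admit chain-level descriptions in terms of the cyclic subcomplex of the bar model, and matching them is a diagram chase. For the algebra structure, the Pontryagin product of \cite[Thm.~4.1]{Au2} is induced by the $E_\infty$-product on $B(D,T,A)$, which under the formality equivalence corresponds to the commutative product on $B(\Sym,T,A)$, i.e.\ to the product on $\HR_\ast(A,k)$. The main obstacle I anticipate is the identification of $\Tor^{\Delta S}_\ast(k, B_{\rm sym}A)$ with the $E_\infty$-bar construction $B(D,T,A)$, since this requires extracting an explicit operadic structure from the combinatorics of $\Delta S$ and aligning the two simplicial directions. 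A secondary subtlety is the rigidification of the formality step so that it preserves the tensor-algebra resolution of $A$ and the map $\SR(A)_\ast$.
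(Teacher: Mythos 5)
Your plan inverts the paper's logical order in a way that leaves the hardest step unproved. You propose to \emph{start} from the identification $\HS_\ast(A) \cong \H_\ast[B(D,T,A)]$ (Conjecture~\ref{Con2}) and then finish with the formality quasi-isomorphism $B(D,T,A)\to B(\mathrm{Sym},T,A)$ and the observation that $B_\ast(\mathrm{Sym},T,A)\cong B_\ast(T,T,A)_{\rm ab}$ computes $\HR_\ast(A,k)$. Those last two steps are fine and exactly match the chain of isomorphisms in the proof of Corollary~\ref{bdta}. But in the paper Corollary~\ref{bdta} is \emph{deduced from} Theorem~\ref{T1a}, not the other way around: Ault's $CA$ construction gives an $E_\infty$-model for $\HS_\ast(A)$, yet the identification of $CA$ with $B(D,T,A)$ for a \emph{general} algebra $A$ is precisely the open Conjecture~\ref{Con2} (Ault himself suspects it is false integrally). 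Your sketch of Step 1 --- that ``morphisms in $\Delta S$ encode the structure maps of an $E_\infty$-operad'' so that the $\Delta S$-bar resolution ``yields the desired $E_\infty$ bar model'' --- asserts rather than proves this identification, and you correctly flag it as your ``main obstacle.'' As written, it is a genuine gap: without an independent proof of Conjecture~\ref{Con2}, the argument is circular relative to the paper.

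The paper sidesteps the bar-alignment problem by proving a much narrower statement, Proposition~\ref{pC}: for a tensor algebra $TV$ over a field of characteristic zero, $\Tor^{\Delta S}_i(k,B_{\rm sym}TV)$ vanishes for $i>0$ and equals $\mathrm{Sym}_k(V)$ for $i=0$. For $TV=k[J(X_+)]$ one has a \emph{basis} $X$ in hand, so Ault's Lemmas 33 and 36 apply directly, giving $\HS_\ast(TV)\cong\H_\ast(\mathcal{D}X;k)\cong\bigoplus_n k[E_\ast\Sigma_n]\otimes_{k[\Sigma_n]}V^{\otimes n}$, which collapses to $\mathrm{Sym}_k(V)$ in characteristic zero since the $\Sigma_n$ have no higher homology. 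Once this single computation is in place, Theorem~\ref{T1a} follows by pure homological algebra: take a semi-free simplicial resolution $R\sonto A$; each level $R_n$ is a tensor algebra, so Proposition~\ref{pC} (applied levelwise) says the augmentation $k\otimes^{\L}_{\Delta S}\mathrm{C}(B_{\rm sym}R)\to k\otimes_{\Delta S}\mathrm{C}(B_{\rm sym}R)\cong\mathrm{C}(R_{\rm ab})$ is a quasi-isomorphism, and this is exactly the map inducing $\SR_\ast(A)$. No realignment of two simplicial directions is needed, and no naturality-in-$V$ (as opposed to naturality-in-a-basis) of Ault's topological model has to be rigidified. Your secondary worry about compatibility with $\SR(A)_\ast$ and with the Pontryagin product is real; the paper disposes of it separately in Proposition~\ref{factorTr}, which shows $\SR_\ast(A)$ is an algebra map via the $E_\infty$-structure on Ault's $CA$, \emph{before} knowing it is an isomorphism. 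If you want to rescue your outline, the fix is to replace your Step~1 by Proposition~\ref{pC} for tensor algebras and then run the spectral-sequence argument over a cofibrant resolution, rather than trying to prove Conjecture~\ref{Con2} on the nose.
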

%


%
%
\begin{remark}
\la{Rem1}
One may wonder whether the isomorphism \eqref{Iso} holds in general, i.e. for algebras defined over a field $k$ of positive characteristic or even an arbitrary commutative ring. This is not the case: the simplest counterexample is $ A= k[x] $, the polynomial ring of one variable. Since $ k[x] $ is free as a $k$-algebra over any commutative ring $k$, we have $\, \HR_i(k[x], k^n) = 0 \,$ for all $ i>0 $ and all $n \ge 1$. On the other hand, Ault's (computer-assisted) calculations (see \cite{Au1}, Section 11.4 and, in particular, Conjecture 94) show that $ \HS_1(k[x]) \not=0 $ if $ k = \bF_2$ or $ k = \Z \,$. 
\end{remark}
\begin{remark}
\la{Rem2}
For $ A = k[\Gamma] $, the group algebra of a (simplicial) group
$\Gamma $, the map \eqref{Triso} was constructed in our previous paper \cite{BR22}. In this case, \eqref{Triso} has a topological origin: it is induced (on homology) by a natural map of spaces:
\begin{equation}
\SR_{B\Gamma}:\ \Omega\, \Omega^{\infty} \Sigma^{\infty}\!(B \Gamma) \,\to\,   \Omega\,\SP^{\infty}\!(B\Gamma)\,,
\end{equation}
where $\,\SP^\infty\! (B\Gamma)\,$ denotes the classical Dold-Thom space of the classifying 
space of $ \Gamma\,$. The isomorphism \eqref{Iso} was established in \cite{BR22} for
$ A = k[\Gamma] $ under the additional assumption that $ B\Gamma $ has homotopy type of a simply connected CW complex of (locally) finite type over $\Q$ (see \cite[Corollary 5.1]{BR22}). We strengthen this last result of \cite{BR22}
by removing the above additional assumption (see Corollary~\ref{GrHS}).
\end{remark}
Despite its modest appearance Theorem~\ref{T1} is a useful result as it allows one 
to translate known facts about representation homology to symmetric homology. First of all, we can prove (see Section~\ref{S3.3}):
\begin{theorem}
\la{TPoly}
Let $V$ be a finite-dimensional vector space over a field $k$ of 
characteristic $0$, and let $ S_k(V) $ denote the symmetric algebra of $V$.
There is a natural isomorphism of graded commutative algebras
\begin{equation*}
\la{Eq11a}
{\rm HS}_\ast[S_k(V)] \,\cong\, \Lambda_k\big( \Moplus_{i=1}^{\dim V} \wedge^i V [i-1] \big)\,, 
\end{equation*}
where $ \Lambda_k $ stands for the graded symmetric algebra over $k$.
\end{theorem}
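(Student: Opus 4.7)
My plan is to combine Theorem~\ref{T1} with a direct computation of the one-dimensional representation homology of $S_k(V)$. By Theorem~\ref{T1} we have $\HS_*(S_k(V)) \cong \HR_*(S_k(V), k)$ as graded commutative algebras, so the task reduces to computing $\HR_*(S_k(V), k)$. Working in characteristic zero, I pass via Dold-Kan to the Quillen-equivalent category $\DGA_k$ of non-negatively graded DG associative algebras and construct an explicit cofibrant resolution there.

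The key observation is that $S_k(V) = U(V_{\ab})$ is the universal enveloping algebra of the abelian Lie algebra on $V$, so Koszul duality between the Lie and associative operads furnishes a natural resolution: the cobar construction
\[
R \,:=\, \bOmega\bigl(C^{\mathrm{CE}}_*(V_{\ab})\bigr) \,\xrightarrow{\ \sim\ }\, U(V_{\ab}) \,=\, S_k(V)
\]
is a semi-free (hence cofibrant) replacement in $\DGA_k$. Since $V$ is abelian, the Chevalley-Eilenberg differential vanishes and $C^{\mathrm{CE}}_*(V_{\ab})$ is just the exterior coalgebra $\wedge^\bullet V$ with the shuffle (deconcatenation) coproduct. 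After the cobar degree shift, $R$ is free as a graded associative algebra on
\[
W \,:=\, \Moplus_{i=1}^{\dim V} \wedge^i V[i-1],
\]
and the cobar differential $d_R$ sends each generator $v_1\wedge\cdots\wedge v_n$ to an antisymmetric sum of products of lower-degree generators --- i.e., $d_R$ takes values in graded commutators.

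Now I apply the abelianization $(\,\mbox{--}\,)_1 \colon \DGA_k \to \cDGA_k$. Because $R$ is free on $W$, the underlying graded algebra of $R_{\ab}$ is the free graded-commutative algebra $\Lambda_k(W)$; and because $d_R$ lands in commutators, the induced differential on $R_{\ab}$ is identically zero. Hence
\[
\HR_*(S_k(V), k) \,=\, H_*(R_{\ab}) \,=\, \Lambda_k\bigl(\,\Moplus_{i=1}^{\dim V} \wedge^i V[i-1]\,\bigr),
\]
which, together with Theorem~\ref{T1}, gives the desired formula.

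The main subtle point is to make precise the claim that $\bOmega C^{\mathrm{CE}}_*(V_{\ab})$ is a cofibrant resolution of $U(V_{\ab})$ with exactly the stated generators and grading --- this is a standard instance of Koszul duality in characteristic zero (going back to Quillen's work on rational homotopy), but the degree shifts have to be tracked carefully to recover precisely $\wedge^i V[i-1]$. A secondary, essentially formal point is that the isomorphism respects the graded-commutative algebra structure on $\HS_*$ provided by Ault's Pontryagin product (\cite[Theorem~4.1]{Au2}): the product on $\HR_*(A, k)$ is inherited from the CDGA structure on $R_{\ab}$, and Theorem~\ref{T1} is already an isomorphism of algebras.
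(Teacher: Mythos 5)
Your argument is correct and coincides with the paper's: the paper proves the more general Theorem~\ref{hsua} on universal enveloping algebras of chain DG Lie algebras by exactly this cobar/Koszul-duality computation (using cocommutativity of the Chevalley--Eilenberg coalgebra $C$ to show that the quadratic part of the cobar differential lands in $\wedge^2(\bar C[-1])$, i.e.\ in graded commutators, hence dies on abelianization), and then specializes to the abelian Lie algebra on $V$ to obtain Theorem~\ref{TPoly}. The one step you leave implicit --- passing from the simplicial definition of $\HR_*$ in the Introduction to a computation with a DG cofibrant resolution via Dold--Kan --- is exactly what Proposition~\ref{tA} in the paper supplies, so you should cite it (or prove it) rather than invoke Theorem~\ref{T1} alone.
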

In fact, Theorem~\ref{TPoly} is a trivial case of the following general
result on symmetric homology of universal enveloping algebras that we will
prove using results of \cite{BFPRW} on representation homology of Lie algebras
(see Theorem~\ref{hsua}):
\begin{theorem}
\la{T2}
Let $ U \mfa $ be the universal enveloping algebra of a Lie algebra $\mfa$ defined
over a field $k$ of characteristic $0$.  There is a natural isomorphism of graded commutative algebras
$$
\HS_\ast(U \mfa)\,\cong\,\Lambda_k[\,\rH_{\ast+1}(\mfa,\,k)\,]\,,
$$
where $ \rH_\ast(\mfa,\,k) $ is the $($reduced$)$
Chevalley-Eilenberg Lie algebra homology of $ \mfa $ with trivial coefficients.
\end{theorem}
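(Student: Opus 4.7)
The plan is to reduce Theorem~\ref{T2} to a computation of one-dimensional representation homology via Theorem~\ref{T1}, and then to invoke a derived Poincar\'e--Birkhoff--Witt formula for $\HR_\ast(U\mfa, k)$ obtained from the results of \cite{BFPRW} (this will be made explicit in Theorem~\ref{hsua}). Since $k$ has characteristic zero, Theorem~\ref{T1} applies directly to the associative algebra $U\mfa$, giving an isomorphism of graded commutative algebras $\HS_\ast(U\mfa) \cong \HR_\ast(U\mfa, k)$. The task thus reduces to computing the one-dimensional representation homology of a universal enveloping algebra.

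For this latter computation, I would exploit the classical identity $(U\mfa)_1 = S(\mfa_{\ab})$, where $\mfa_{\ab} = \mfa/[\mfa,\mfa]$, which is the composition of the abelianization of associative algebras with $U$. To lift this to a derived statement, choose a cofibrant simplicial Lie algebra resolution $\cL\mfa \sonto \mfa$ in the standard projective model structure on $\sLie_k$. The universal enveloping functor $U\colon \sLie_k \to \sAlg_k$ is a left Quillen functor (its right adjoint is the Lie-algebra functor), and the ordinary PBW theorem shows that $U$ preserves weak equivalences on cofibrant objects, so $U(\cL\mfa)$ serves as a cofibrant resolution of $U\mfa$ in $\sAlg_k$. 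Applying the one-dimensional representation functor and taking homotopy then gives
$$
\HR_\ast(U\mfa, k) \,=\, \pi_\ast\bigl[(U(\cL\mfa))_1\bigr] \,\cong\, \pi_\ast\bigl[S((\cL\mfa)_{\ab})\bigr] \,\cong\, \Lambda_k\bigl[\,\pi_\ast((\cL\mfa)_{\ab})\,\bigr],
$$
where the last isomorphism uses that in characteristic zero, $S$ commutes with $\pi_\ast$ up to passage to the graded symmetric algebra $\Lambda_k$ (a Dold--Kan/Eilenberg--Zilber style K\"unneth argument). To finish, one invokes Quillen's identification of the derived abelianization of a Lie algebra with a shift of its reduced Chevalley--Eilenberg homology,
$$
\pi_\ast\bigl((\cL\mfa)_{\ab}\bigr) \,\cong\, \rH_{\ast+1}(\mfa, k),
$$
which yields the desired isomorphism $\HS_\ast(U\mfa) \cong \Lambda_k[\rH_{\ast+1}(\mfa, k)]$ at the level of graded vector spaces. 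The identification of multiplicative structures is then automatic: Theorem~\ref{T1} is multiplicative by construction, and the commutative algebra structure on $\HR_\ast(U\mfa, k)$ coming from the cofibrant resolution $U(\cL\mfa)$ is manifestly the free one on $\pi_\ast((\cL\mfa)_{\ab})$.

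I expect the main technical obstacle to lie in transporting the results of \cite{BFPRW}, which were originally stated in the setting of DG Lie algebras over a field of characteristic zero, into the simplicial framework used here. In particular, one must verify that the Quillen equivalence between non-negatively graded DG Lie algebras and simplicial Lie algebras (via Dold--Kan) is compatible with the universal enveloping functor $U$ and with the one-dimensional representation functor $(-)_1$, so that the explicit PBW-type formula for $\HR_\ast(U\mfa, k)$ supplied by Theorem~\ref{hsua} is indeed applicable. Once these compatibilities are checked, the proof reduces to assembling the three isomorphisms above. The special case $\mfa$ abelian (Theorem~\ref{TPoly}) provides a useful sanity check, since then $\rH_{\ast}(\mfa, k) = \wedge^\ast \mfa$ and the formula recovers the expected symmetric homology of $S_k(V)$.
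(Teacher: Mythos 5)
Your proposal is correct in substance but follows a genuinely different path from the paper's own argument. Both proofs begin by invoking Theorem~\ref{T1} (in the paper, via its DG incarnation Proposition~\ref{tA}) to reduce the statement to computing $\HR_\ast(U\mfa,k)$. At that point the paper works entirely in the DG setting: it uses Koszul duality, taking the cobar construction $\Omega(C)$ on the Chevalley--Eilenberg coalgebra $C=\mathcal{C}_\ast(\mfa;k)$ as an explicit cofibrant DG algebra resolution of $U\mfa$, and then observes that cocommutativity of $\bar C$ forces the quadratic part of the cobar differential to land in $\wedge^2$ and hence die upon abelianization, giving $\Omega(C)_{\rm ab}\cong\Lambda_k(\bar C[-1])$ with only the linear differential. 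You instead stay simplicial, resolve $\mfa$ by a cofibrant simplicial Lie algebra $\cL\mfa$, push through $U$ (verifying that $U$ is left Quillen and preserves the relevant weak equivalences by PBW), use the classical identity $(U\g)_{\rm ab}\cong S(\g_{\rm ab})$, and finish with Quillen's identification of the derived Lie-algebra abelianization with shifted reduced Chevalley--Eilenberg homology. This is a more conceptual, less computational route that avoids the cobar construction entirely, at the cost of invoking several facts (the Quillen adjunction for $U$, Ken Brown's lemma, Quillen's theorem on derived Lie abelianization) that each require brief justification; the paper's argument is more self-contained and also yields directly the slightly more general Theorem~\ref{hsua} for chain DG Lie algebras. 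You correctly flagged the Dold--Kan transport issue between simplicial and DG frameworks — this is exactly the gap that the paper fills with Proposition~\ref{tA}, so your concern is well placed and your overall plan is sound.
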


\noindent
As a consequence of Theorem~\ref{TPoly}, it is not hard to deduce (modulo well-known topological results) that 
\begin{corollary}[see Corollary~\ref{hspoly}]
Conjecture~\ref{Con1} holds true over a field $k$ of characteristic $0$.
\end{corollary}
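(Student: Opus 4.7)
The plan is to combine Theorem~\ref{TPoly} with standard topological computations of the rational homology of the space appearing in \eqref{Eq1}. Taking $V = k^n$ in Theorem~\ref{TPoly} and using $\dim \wedge^i V = \binom{n}{i}$, one obtains
\begin{equation*}
\HS_\ast(k[x_1,\ldots,x_n]) \,\cong\, \Lambda_k\bigg( \Moplus_{i=1}^n k^{\binom{n}{i}}[i-1] \bigg)
\end{equation*}
as graded commutative algebras. It therefore suffices to identify the rational homology of the topological product on the right-hand side of \eqref{Eq1} with the same free graded commutative algebra.

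For this I would invoke two classical facts. First, for any based space $X$ and any field $k$ of characteristic zero, Serre's theorem identifies $\pi_\ast^s(X)\otimes k$ with $\tilde{\mathrm H}_\ast(X,k)$ and $\Omega^\infty\Sigma^\infty X$ splits rationally as a weak product of Eilenberg-MacLane spaces, yielding
\begin{equation*}
\H_\ast(\Omega^\infty \Sigma^\infty X,\, k) \,\cong\, \Lambda_k\big(\tilde{\mathrm H}_\ast(X, k)\big)
\end{equation*}
as graded commutative algebras under the Pontryagin product. Applied to $X = \bS^{i-1}$ with $i \geq 2$ this yields $\Lambda_k(k[i-1])$. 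Second, by the standard monadic formula, $\cC_\infty(\bS^0) \simeq \coprod_{m \geq 0} B\Sigma_m$ is the free $E_\infty$-space on a point; since each $\Sigma_m$ is finite, every component is rationally acyclic, so $\H_\ast(\cC_\infty(\bS^0),k) \cong \bigoplus_{m \geq 0} k$ is concentrated in degree zero. Under the Pontryagin product (induced by the maps $B\Sigma_p \times B\Sigma_q \to B\Sigma_{p+q}$) it is the monoid ring $k[\N] = k[t] = \Lambda_k(k[0])$.

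Taking Künneth across the factors of \eqref{Eq1} then gives
\begin{equation*}
\H_\ast\bigg(\prod_{i=1}^n \cC_{\infty}(\bS^0) \times \prod_{i=2}^n \Omega^{\infty}\Sigma^{\infty}(\bS^{i-1})^{\binom{n}{i}},\, k\bigg) \,\cong\, \Lambda_k\bigg( \Moplus_{i=1}^n k^{\binom{n}{i}}[i-1] \bigg),
\end{equation*}
which matches the algebraic expression above (using $\binom{n}{1} = n$). Since both sides are free graded commutative algebras on bigraded vector spaces of identical dimensions, any isomorphism of the generating graded vector spaces extends uniquely to an isomorphism of graded commutative algebras, and the corollary follows.

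The only point requiring genuine care is the treatment of the disconnected factor $\cC_\infty(\bS^0)$: unlike for connected $X$, the group-completion map $\cC_\infty(X) \to \Omega^\infty \Sigma^\infty X$ is not an equivalence here, so one must work with $\cC_\infty(\bS^0)$ itself (contributing $k[t]$ in degree zero) rather than with its group completion (which would contribute $k[t,t^{-1}]$). Everything else is a direct combination of Theorem~\ref{TPoly} with classical rational homotopy theory and the Künneth theorem.
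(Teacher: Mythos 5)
Your argument is essentially the same as the paper's: compute the rational homology of each factor in \eqref{Eq1} via the K\"unneth theorem and match it with the algebraic formula from Theorem~\ref{TPoly}. The only cosmetic differences are in the citations used for the individual factors (you invoke the rational splitting of $\Omega^\infty\Sigma^\infty$ into Eilenberg--MacLane spaces and the identification $\cC_\infty(\bS^0)\simeq\coprod_m B\Sigma_m$ with rational acyclicity of $B\Sigma_m$, whereas the paper uses the quasi-isomorphism $DV\to\Sym_k(V)$ from its second proof of Proposition~\ref{pC} and the identification $\Omega^\infty\Sigma^\infty\simeq_{\Q}\SP^\infty$); both arrive at the same free graded commutative algebras.
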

\noindent
In a similar fashion, using Theorem~\ref{T1} and results of \cite{Au1}, we will show that
%
\begin{corollary}[see Corollary~\ref{bdta}]
Conjecture~\ref{Con2} holds true over a field $k$ of characteristic $0$.
\end{corollary}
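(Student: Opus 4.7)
By Theorem~\ref{T1}, it suffices to identify $\H_*(B(D, T, A), k)$ with the one-dimensional representation homology $\HR_*(A, k)$ whenever $k$ has characteristic zero. The strategy is to recognise $B(D, T, A)$ as a homotopical model for the derived abelianization of $A$.

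The simplicial associative algebra $B(T, T, A)$, with $n$-simplices $T^{n+1}(A)$ and faces/degeneracies induced by the unit and multiplication of the tensor-algebra monad $T$ together with the $T$-action on $A$, is the standard cotriple resolution of $A$ in $\sAlg_k$ by free associative algebras; it is cofibrant in the projective model structure. Applying the abelianization functor $(-)_1$ degree-wise (using that $(T(V))_1 = \Sym(V)$) yields the two-sided bar construction $B(\Sym, T, A)$ with $n$-simplices $(T^{n+1} A)_1 = \Sym(T^n A)$, and by the definition \eqref{RHDefAlg} its homotopy groups compute $\HR_*(A, k)$.

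Next, we compare $B(D, T, A)$ with $B(\Sym, T, A)$. The canonical operad augmentation $\E \to \Comm$, given arity-wise by $\E(n) \to k$, is a morphism of operads in simplicial $k$-modules and induces a morphism of monads $D \Rightarrow \Sym$. Evaluated on a simplicial $k$-module $V$, this is the natural map $\bigoplus_n (\E(n) \otimes V^{\otimes n})_{\Sigma_n} \to \bigoplus_n (V^{\otimes n})_{\Sigma_n}$. Since $\E(n)$ is a contractible free $\Sigma_n$-simplicial set and the order of $\Sigma_n$ is invertible in $k$, this map is a levelwise weak equivalence (coinvariants in characteristic zero agree with homotopy coinvariants). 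Compatibility of $D \Rightarrow \Sym$ with the right $T$-module structures on $D$ and $\Sym$ (both arising from the monad maps $T \Rightarrow D \Rightarrow \Sym$) then induces a levelwise weak equivalence of bisimplicial $k$-modules $B(D, T, A) \to B(\Sym, T, A)$; passing to diagonals yields $\H_*(B(D, T, A), k) \cong \pi_*(B(\Sym, T, A)) \cong \HR_*(A, k)$, whence the claim by Theorem~\ref{T1}.

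The principal technical obstacle is to verify carefully that the monad morphism $D \Rightarrow \Sym$ is compatible with the two-sided bar-construction framework of \cite{Au1} --- in particular, that the right $T$-module structure on $D$ used to form $B(D, T, A)$ is sent by $D \Rightarrow \Sym$ to the corresponding right $T$-module structure on $\Sym$ that defines $B(\Sym, T, A)$. The other ingredients --- Maschke-type exactness of $\Sigma_n$-coinvariants in characteristic zero, the operad augmentation $\E \to \Comm$, and homotopy invariance of bisimplicial diagonals under levelwise weak equivalence --- are standard in the theory of $E_\infty$-algebras, so once the framework is aligned the comparison will go through.
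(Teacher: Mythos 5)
Your proposal is correct and follows essentially the same route as the paper: both compare $B(D,T,A)$ with $B(\Sym,T,A)$ via a natural levelwise homology isomorphism $DV\to\Sym_k(V)$ (coming from Maschke-type invertibility of $|\Sigma_n|$ in characteristic zero), then identify $\pi_*[B_*(\Sym,T,A)]$ with $\HR_*(A,k)$ using the fact that $B_*(T,T,A)$ is a semi-free simplicial resolution of $A$, and conclude by Theorem~\ref{T1}. The paper obtains the homology equivalence $DV\to\Sym_k(V)$ by citing its second proof of Proposition~\ref{pC} rather than phrasing it via the operad augmentation $\E\to\Comm$, but the substance is identical.
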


The paper is organized as follows. In Section~\ref{S2}, we review definitions and
prove some results on representation homology needed for the present paper. Of independent interest here may be Theorem~\ref{grouptoalg} that compares two different kinds of representation homology: for groups and associated group algebras. 
The proofs of our main results appear in Section~\ref{S3}. In Section~\ref{S3.1},
we construct the derived character map \eqref{DTr}  
and show that, when $n=1$ and $k$ is a field, it factors through symmetric homology (Proposition~\ref{factorTr}). In Section~\ref{S3.2}, we prove Theorem~\ref{T1}
in a slightly more general form: for any simlicial $k$-algebra (Theorem~\ref{T1a}).
The key fact behind this result is Proposition~\ref{pC}, for which we decided to
provide two independent proofs, both relying on (different) topological arguments.
In Section~\ref{S3.2}, we also verify Conjecture~\ref{Con2} (Corollary~\ref{bdta}) and give another version of our main theorem: for associative DG algebras
(Proposition~\ref{tA}). Finally, in Section~\ref{S3.3}, we prove Theorem~\ref{T2}
(in a more general setting of chain DG Lie algebras) and deduce from it Theorem~\ref{TPoly} and Conjecture~\ref{Con1} (Corollary~\ref{hspoly}) stated above.

\subsection*{Notation and conventions} Throughout the paper, $ k $ denotes a commutative ground ring. 
Beginning with Section~\ref{S3.2}, we will specialize $k$ to be a field of characteristic zero. 
$\Mod_k$ denotes the category of $k$-modules; an unadorned tensor product $\, \otimes \,$ stands for the tensor product $\, \otimes_k \,$ over $k$. 
An algebra means an associative $k$-algebra with $1$; the category of such algebras is denoted $ \Alg_k $. A commutative algebra means a commutative associative $k$-algebra with $1$; the category of such algebras is denoted $ \Comm_k $. The corresponding categories of simplicial and differential-graded algebras are denoted $ \sAlg_k$, $ \sComm_k$, $\DGA_k$,  and $ \DGCA_k$, respectively.
If $k$ is a field and $V$ is a graded vector space over $k$, we write $ \Lambda_k(V) $ for its graded
symmetric algebra  over $k\,$: thus $\, \Lambda_k(V) := S_k(V_{\rm ev}) \otimes E_k(V_{\rm odd}) $, where $ S_k(V_{\rm ev}) $ and $ E_k(V_{\rm odd}) $ are the symmetric and exterior
algebras of the even and odd subspaces of $V$, respectively.

\section{Preliminaries} 
\la{S2}
\subsection{Functor tensor products}
\la{S2.1}
We begin by recalling the classical construction of functor tensor products (see \cite[Appendix C.10]{L}). For a  small category $\cC$, we let ${\rm Mod}_k(\cC)$ and ${\rm Mod}_k(\cC^{\rm op})$ denote the categories of all covariant and contravariant functors from $\cC$ to ${\rm Mod}_k$, respectively. It is well known that these are abelian categories with sufficiently many  projective and injective objects. Moreover, there is a natural bi-additive functor
\begin{equation*} 
\la{s2e1} \mbox{--} \otimes_{\cC} \mbox{--}\,:\, {\rm Mod}_k(\cC^{\rm op}) \times {\rm Mod}_k(\cC) \to {\rm Mod}_k 
\end{equation*}
called the {\it functor tensor product}.
Explicitly, for  $ \M: \cC \to \Mod_k $ and $ \cN:\cC^{\rm op}\to \Mod_k $, it is defined by
\begin{equation} 
\la{s2e2}
\cN \otimes_{\cC} \M \, := \, \big[\Moplus_{c \in \cC} \cN(c) \otimes \M(c) \big]/R
\end{equation}
where $R$ is the $k$-submodule generated by elements of the form $ \cN(\varphi)(x) \otimes y -x \otimes \M(\varphi)(y)$ for all $x \in \cN(c') $, $y \in \M(c)$ and $\varphi \in {\rm Hom}_{\cC}(c,c')$. We extend the bifunctor \eqref{s2e1} to the categories of
chain complexes of $\cC$-modules (which we denote by ${\rm Ch}({\rm Mod}_k(\cC^{\rm op}))$ and ${\rm Ch}({\rm Mod}_k(\cC))$) and define 
\begin{equation} \la{s2e3} {\rm Tor}^{\cC}_\ast(\cN,\M)\,:=\, {\rm H}_{\ast}(\cN \otimes^{\L}_{\cC} \M) \end{equation}
for $\cN \in {\rm Ch}({\rm Mod}_k(\cC^{\rm op}))$ and $\M \in {\rm Ch}({\rm Mod}_k(\cC))$. Note that $\cN \otimes^{\L}_{\cC} \M$ is an object in the (unbounded) derived category $\mathscr{D}(k) := \mathscr{D}({\rm Mod}_k)$ of $k$-modules, and \eqref{s2e3} is just the usual hyper-Tor functor on chain complexes. Next, we observe that there is a natural functor 
transforming the $\cC^{\rm op}$-diagrams in $ \sSet $ (simplicial presheaves on $\cC$) to 
chain complexes over ${\rm Mod}_k(\cC^{\rm op})$:
\begin{equation} \la{s2e4} \sSet^{\cC^{\rm op}} \,\xrightarrow{\,k[\,\mbox{--}\,]\,} \, 
{\rm sMod}_k({\cC}^{\rm op}) \xrightarrow{N} \, {\rm Ch}({\rm Mod}_k(\cC^{\rm op}))\,,
\end{equation}
where $k[\,\mbox{--}\,]$ is the ($\C$-objectwise extension of the) free module functor on
simplicial sets and $N$ is the classical Dold-Kan normalization functor. Abusing notation, we will write \eqref{s2e4} simply as $\,k[\,\mbox{--}\,]\,$. 

Finally, we recall one well-known result on derived functor tensor products that we will use repeatedly in this paper. Let $ p: \cC \to \mathcal{D} $ be a map of small categories. The restriction functor $ p^*: \Mod_k(\D) \to \Mod_k(\C) $ associated to $p$ is exact and has a left adjoint $\, p_!: \Mod_k(C) \to
\Mod_k(\D) \,$, which is a right exact functor on chain complexes. Replacing $ p_! $ with its
(abelian) left derived functor $ \L p_!$, we get a natural adjunction between derived categories:
$$ \L p_! \,:\, \mathscr{D}({\rm Mod}_k(\cC)) \rightleftarrows \mathscr{D}({\rm Mod}_k(\mathcal{D}))\,:\, p^{\ast}\ .$$
\begin{lemma} \la{torpsh}
For $\cN \in {\rm Mod}_k(\mathcal{D}^{\rm op})$, $\M \in {\rm Mod}_k(\cC)$ there is a natural isomorphism in $\mathscr{D}(k)$:
$$ p^{\ast}\cN \otimes^{\L}_{\cC} \M\,\cong\, \cN \otimes^{\L}_{\mathcal{D}} \L p_!\M\ . $$
\end{lemma}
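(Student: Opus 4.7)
The plan is to reduce the derived projection formula to its underived counterpart by exploiting the fact that $p_!$ sends representable $\cC$-modules to representable $\D$-modules, and hence preserves projectives.

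First, I would establish the underived projection formula
$$ p^{\ast}\cN \otimes_{\cC} \M \;\cong\; \cN \otimes_{\D} p_{!}\M, $$
naturally in both $\cN$ and $\M$. This is most easily seen by viewing the functor tensor products as coends and using the adjunction $p_{!} \dashv p^{\ast}$: on a free generator $\M = k[\Hom_{\cC}(c,-)]$ one computes from the definition \eqref{s2e2} that both sides reduce to $\cN(p(c))$, and since both sides are right exact and coproduct-preserving in $\M$, the isomorphism extends to all $\M$ by presenting $\M$ as a cokernel of a map between direct sums of representables.

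Next I would check that $p_{!}$ preserves projectives. Indeed $p_{!}(k[\Hom_{\cC}(c,-)]) = k[\Hom_{\D}(p(c),-)]$ by a direct computation from the coend formula (or from the adjunction $p_{!} \dashv p^{\ast}$ combined with the Yoneda lemma). Since every projective object in $\Mod_k(\cC)$ is a retract of a direct sum of representables, $p_{!}$ carries it to a retract of a direct sum of representables in $\Mod_k(\D)$, which is again projective.

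Now choose a projective resolution $P_{\bullet} \sonto \M$ in $\Mod_k(\cC)$ (or, working with chain complexes, a K-projective replacement of $\M$ in $\Ch(\Mod_k(\cC))$, so the argument goes through in the unbounded setting needed for the lemma). By the previous step, $p_{!}P_{\bullet}$ is a complex of projectives quasi-isomorphic to $\bL p_{!}\M$, so it may be used to compute the right-hand derived tensor product. Applying the underived iso of the first step degreewise yields a chain-level identification
$$ \cN \otimes_{\D} p_{!}P_{\bullet} \;\cong\; p^{\ast}\cN \otimes_{\cC} P_{\bullet}, $$
whose left side represents $\cN \otimes^{\L}_{\D} \bL p_{!}\M$ and whose right side represents $p^{\ast}\cN \otimes^{\L}_{\cC}\M$ (since $P_{\bullet}$ is projective). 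The resulting isomorphism in $\Dd(k)$ is natural in $\cN$ (being natural already at the underived level) and is independent of the resolution $P_{\bullet}$ by the usual homotopy-uniqueness of projective resolutions, giving naturality in $\M$.

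The main point requiring care is the preservation-of-projectives statement for $p_{!}$, since without it one could not replace $\bL p_{!}\M$ by a resolution adequate for the outer derived tensor product; once that is in hand, the rest is the standard ``one side already resolved'' principle. A minor subtlety is that for the full generality of $\cN,\M \in \Ch(\Mod_k(-))$ stated in the lemma, one should use K-projective resolutions in place of ordinary projective resolutions, but the argument is otherwise identical.
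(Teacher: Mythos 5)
Your proof is correct and self-contained. For this lemma, the paper does not supply an argument of its own: it simply refers the reader to \cite[Prop. 1.1]{Dja19}. Your proposal thus does not duplicate or diverge from a paper-internal proof, but rather fills in a gap the authors delegated to the literature. The route you take --- verify the underived projection formula on representable $\cC$-modules, extend to all of $\Mod_k(\cC)$ by right-exactness and preservation of coproducts, observe that $p_!$ sends representables to representables and hence preserves projectives, then replace $\M$ by a projective (or K-projective) resolution $P_\bullet$ so that both derived tensor products are computed by the single chain-level isomorphism $\cN\otimes_\D p_!P_\bullet \cong p^*\cN\otimes_\cC P_\bullet$ --- is exactly the standard mechanism, and it is essentially the argument in the cited source as well. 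You also correctly identify the one point requiring genuine care (preservation of projectives by $p_!$, without which $p_!P_\bullet$ could not serve as a resolution for the outer derived tensor product) and appropriately flag the K-projective refinement needed if $\cN$ and $\M$ are allowed to be unbounded complexes rather than plain modules.
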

\noindent
The proof of Lemma~\ref{torpsh} and some of its consequences can be found, for example, in {\cite[Prop. 1.1]{Dja19}}.

\subsection{Representation homology} 
\la{S2.2}
We review the definition of representation homology of (simplicial) groups and monoids, following the approach 
of \cite{BRYI}.
Let $\ffgr$ denote the small category whose objects $\langle n \rangle$ are the finitely generated free groups $\mathbb{F}_n = \bF\langle x_1, x_2, \ldots, x_n \rangle$, one for each $ n \ge 0 $ (with convention that $ \langle 0 \rangle $ is the trivial group), and the morphisms are arbitrary group homomorphisms. Every simplicial group $\Gamma \in \sGr$ (i.e. a simplicial object in $\Gr$) defines a functor 
$ \underline{\Gamma}: \ffgr^{\rm op} \to \sSet\,$,$\,\langle n \rangle \mapsto \Gamma^n $,
where $\Gamma^n$ denotes the product of $n$ copies of (the underlying simplicial set of) $\Gamma$ in $\sSet$. Composing this functor with $k[\mbox{--}]$ gives the simplicial $ \ffgr^{\rm op}$-module 
\begin{equation} \la{s2e6} \underline{k[\Gamma]}: \ffgr^{\rm op} \to \sMod_k\,,\qquad \langle n \rangle \mapsto k[\Gamma^n] \cong k[\Gamma]^{\otimes n} \ . \end{equation}
More generally, for any simplicial (or DG) cocommutative Hopf algebra $\cA$, we can define
\begin{equation} \la{rgmod} \underline{\cA}: \ffgr^{\rm op} \to {\rm Ch}({\rm Mod}_k) \,,\qquad \langle n \rangle \mapsto \cA^{\otimes n}\ .\end{equation}
where $\sMod_k$ is identified with ${\rm Ch}(\Mod_k)$ via the Dold-Kan normalization functor.
Dually (see, e.g., \cite[Prop. 14.1.6]{R20}), every {\it commutative} Hopf algebra algebra $ \cH $ defines a covariant functor (left $\ffgr$-module) by the rule
\begin{equation} 
\la{halg}
\underline{\cH}:\ffgr \to {\rm Mod}_k\ ,\quad  \langle n \rangle \mapsto \cH^{\otimes n}\ .
\end{equation}
For example, if $G$ is an affine algebraic group (e.g., $G={\rm GL}_n(k)$) with coordinate ring $\cH = \cO(G)$, then \eqref{halg} can be written in the form $\langle n \rangle \mapsto \cO[{\rm Rep}_G(\langle n \rangle)]$ which makes the functoriality clear. 

\begin{definition} \la{RHDef}
The {\it representation homology} of a group $\Gamma \in \sGr$ with coefficients in $\cH$ is defined by 
\begin{equation*}
\la{HRGr}
{\rm HR}_{\ast}(\Gamma, \cH)\,:=\,{\rm Tor}^{\ffgr}_{\ast}(\underline{k[\Gamma]},\,\underline{\cH}) \ .
\end{equation*}
In the special case when $G$ is an affine algebraic group over $k$ and $ \cH = \cO(G) $, we write
${\rm HR}_{\ast}(\Gamma, G) $ instead of ${\rm HR}_{\ast}(\Gamma, \cO(G))\,$ to simplify the notation.
\end{definition}
Next, recall that there are classical adjoint functors (originally constructed in \cite{Kan1}) that relate the (model) category of simplicial groups to that of (reduced) simplicial sets:
\begin{equation} \la{klg} 
\lgr\,:\, \sSet_0 \rightleftarrows \sGr\,:\,\bar{W}
\end{equation}

The left adjoint $\lgr$ is called the {\it Kan loop group functor}, and the right adjoint $\bar{W}$ is the {\it classifying complex functor} on simplicial groups.  The properties of these functors are well known and described, for example, in \cite[Chap. V]{GJ} (see also \cite[Sect. 2.2]{BRYI}). We mention only that the pair \eqref{klg} is a Quillen equivalence, both $\lgr$ and $\bar{W}$ being homotopy functors (see \cite[V.6.4]{GJ}). By \cite[Lemma 3.2]{BR22}, $\HR_\ast(\Gamma,\cH)$ depends only on the homotopy type of the classifying space $\bar{W}\Gamma$.   

Representation homology can be defined naturally --- in a way similar to Definition~\ref{RHDef} --- for other types of algebraic objects (more precisely, simplicial algebras over arbitrary algebraic theories, see \cite{BR22b}). Here, for purposes of comparison, we will give such a definition for the theory of monoids.

Let $\fM \subset {\rm Mon} $ denote the category whose objects are finitely generated free monoids\footnote{Abusing notation, we will use the same symbols to denote the objects of $\fM$ and $\ffgr$.} $ \langle n \rangle $, one for each $ n \ge 0 $. Group completion yields a natural functor (map of algebraic theories)$\,p:\fM \to \ffgr\,$ that takes the free monoid on $n$ generators to the free group on $n$ generators (for $n \geqslant 0$). Now, for any fixed integer $n \geqslant 1$, the affine monoid $k$-scheme $M_n$ (of $n \times n$-matrices) defines the covariant functor, i.e. the left $\fM$-module 
$$\underline{\cO}(M_n)\,:\,\fM \to {\rm Mod}_k\,,\qquad \langle m \rangle \mapsto \cO[M_n(k)]^{\otimes m}\ . $$
where $\cO[M_n(k)]$ denotes the coordinate ring of $ M_n $. On the other hand, any simplicial monoid $N$ 
yields a right $\fM$-module $k[N]$ with $\langle n \rangle \mapsto k[N]^{\otimes n}$, where $ k[N]$ is the
monoid algebra of $N$. Thus we may define
$$
\HR_\ast^{\rm Mon}(N, M_n(k))\,:=\, {\rm Tor}^{\fM}_\ast(k[N], \underline{\cO}(M_n))\ . 
$$
The next proposition is a special case of a general comparison result proved in \cite{BR22b}.
\begin{proposition}\la{hrmonalg}
Let $k$ be a field. For any simplicial monoid $N$, there is a natural isomorphism
\begin{equation} \la{monalgcomp} 
\HR_\ast^{\rm Mon}(N, M_n(k)) \,\cong\,\HR_\ast(k[N],k^n)\,, 
\end{equation}
where the right-hand side is the $n$-dimensional representation homology of the simplicial algebra $A = k[N]$ as defined in the Introduction.
\end{proposition}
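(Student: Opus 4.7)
The plan is to exhibit a single simplicial commutative $k$-algebra computing both sides of \eqref{monalgcomp}. Choose a simplicial resolution $p\colon QN \sto N$ in $\sMon$ such that $QN_p = \langle s_p\rangle$ is a finitely generated free monoid at each simplicial level $p$; for instance, take the standard cotriple bar resolution attached to the free-monoid/forgetful adjunction $\Set \rightleftarrows \Mon$.

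The resulting simplicial object $k[QN]$ has two compatible interpretations. \emph{As a simplicial $k$-algebra}, $k[QN_p] = k\langle x_1,\ldots,x_{s_p}\rangle$ is the free associative $k$-algebra on $s_p$ generators. Since $k[-]\colon \sMon \to \sAlg_k$ is left Quillen (its right adjoint is the underlying-multiplicative-monoid functor, which preserves weak equivalences and fibrations) and carries dimensionwise-free simplicial monoids to dimensionwise-free, hence cofibrant, simplicial algebras, we obtain a cofibrant resolution $k[QN] \sto k[N]$ in $\sAlg_k$. Consequently $\HR_\ast(k[N], k^n) \cong \pi_\ast[(k[QN])_n]$. \emph{As a right $\fM$-module in each degree}, $k[QN_p] = k[\langle s_p\rangle]$ is the representable presheaf $k[h^{\langle s_p\rangle}]$, since $\Hom_\fM(\langle m\rangle, \langle s_p\rangle) = \langle s_p\rangle^m$. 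A dimensionwise-representable simplicial presheaf arising from a cotriple resolution is projectively cofibrant, so it is admissible for computing the functor $\Tor$; hence $\HR^{\rm Mon}_\ast(N, M_n(k)) \cong \pi_\ast[k[QN] \otimes_\fM \underline{\cO}(M_n)]$.

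A level-wise comparison then closes the argument. By co-Yoneda, the coend evaluates as
\begin{equation*}
k[h^{\langle s_p\rangle}] \otimes_\fM \underline{\cO}(M_n) \,\cong\, \underline{\cO}(M_n)(\langle s_p\rangle) \,=\, \cO(M_n)^{\otimes s_p},
\end{equation*}
while the representation algebra of the free associative algebra on $s_p$ generators is $(k\langle x_1,\ldots,x_{s_p}\rangle)_n \cong \cO(M_n)^{\otimes s_p}$, the polynomial algebra on the $s_p\cdot n^2$ matrix-entry coordinates. Both identifications are natural in $\langle s_p\rangle \in \fM$ and assemble into an isomorphism $(k[QN])_n \cong k[QN] \otimes_\fM \underline{\cO}(M_n)$ of simplicial commutative $k$-algebras; taking $\pi_\ast$ yields the desired isomorphism \eqref{monalgcomp}.

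The main technical obstacle is the pair of cofibrancy verifications used above: that $k[-]\colon \sMon \to \sAlg_k$ is left Quillen and sends dimensionwise-free simplicial monoids to cofibrant simplicial algebras, and that the same cotriple resolution provides a projectively cofibrant replacement of $k[N]$ in the category of simplicial right $\fM$-modules. Both are standard facts about simplicial algebras over a Lawvere algebraic theory and are established uniformly in the general comparison of \cite{BR22b}.
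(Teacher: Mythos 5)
Your overall strategy is the same as the paper's: take a resolution of $N$ in $\sMon$ by levelwise free monoids, interpret $k[\text{resolution}]$ both as a cofibrant simplicial algebra and as a levelwise-acceptable simplicial right $\fM$-module, and identify the two outputs levelwise. However, there is a genuine gap in your setup.

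You require a resolution $QN \sto N$ with each $QN_p$ a \emph{finitely generated} free monoid, and you then use the co-Yoneda identification $k[QN_p] \cong k[h^{\langle s_p\rangle}]$, which only makes sense because $\langle s_p\rangle$ is an object of $\fM$ (whose objects are \emph{finitely generated} free monoids). But no such resolution exists for a general simplicial monoid $N$: the cotriple bar resolution for $\Set \rightleftarrows \Mon$ produces in level $p$ the free monoid on the underlying set of an iterate of the comonad, which is infinite unless $N$ is levelwise finite. The paper avoids this by taking an arbitrary semi-free resolution $R \sto N$ and observing that each $k[R_m]$ is a \emph{filtered colimit} of representables $k[\langle s\rangle]$ with $\langle s\rangle \in \fM$; a filtered colimit of projective $\fM$-modules is flat rather than projective, but flat objects are still admissible for computing $\Tor^{\fM}$. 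The levelwise identification $k[R_m] \otimes_\fM \underline{\cO}(M_n) \cong (k[R_m])_n$ then follows by passing the co-Yoneda isomorphism through the colimit (this is what the paper cites from [BRYI, Prop.~4.1]). To repair your proof, replace the finite-generation claim by this flatness/colimit argument; the rest of your reasoning, including the identification of $k[QN] \to k[N]$ as a cofibrant replacement in $\sAlg_k$, goes through essentially as you wrote it. Note also that the Künneth argument (to see that $k[R] \to k[N]$ is a levelwise weak equivalence of $\fM$-modules) is where the hypothesis that $k$ is a field enters; you should say this explicitly since it is the only place the hypothesis is used.
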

\begin{proof}
Let $R \stackrel{\sim}{\to} N$ be a semi-free simplicial resolution. Since $k$ is a field, $k[R] \to k[N]$ is a weak equivalence of simplicial (right) $\fM$-modules by the K\"{u}nneth theorem. As in the proof of \cite[Theorem 4.2]{BRYI}, the right $\fM$-module $k[R]_m = k[R_m]$ of $m$-simplices is flat, since the module $k[S]$ corresponding to any {\it finitely generated} free monoid $S$ is projective, and $k[{R}_m]$ is a colimit of such modules. As in {\it loc. cit.}, it follows that 
\begin{equation} \la{ehradf1} \HR_\ast^{\rm Mon}(N, M_n(k)) \,\cong\, \pi_\ast[k[R] \otimes_{\fM} \underline{\cO}(M_n)]\ .
\end{equation}
Using the same argument as in \cite[Prop. 4.1]{BRYI}, it is easy to show that the tensor product in the right-hand side of \eqref{ehradf1} is naturally isomorphic to $\,(k[R])_n$, where $ (\,\mbox{--}\,)_n $ denotes the (degreewise simplicial extension of) representation functor \eqref{adj1}. Since the algebra $k[R] $ is a semi-free simplicial resolution of $k[N]$, we have
$\HR_\ast(k[N],k^n) \cong \pi_\ast[(k[R])_n]$ by definition \eqref{RHDefAlg}. This proves the desired proposition.
\end{proof}
Next, we compare the representation homology of (simplicial) monoids to that of (simplicial) groups.
\begin{proposition} \la{grpmonrh}
Let $k$ be a field of characteristic $0$. Then, for any simplicial group $\Gamma \in \sGr$, there is a natural isomorphism
\begin{equation} \la{grpmon}\HR_\ast(\Gamma,\GL_n(k)) \,\cong\, \HR_\ast^{\rm Mon}(\Gamma, M_n(k))\ .\end{equation}
\end{proposition}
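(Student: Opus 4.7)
The plan is to apply Lemma~\ref{torpsh} with the natural functor of algebraic theories $p\colon \fM \to \ffgr$ that sends a finitely generated free monoid to its group completion. Since $\Gamma$ is a simplicial group, its underlying simplicial monoid structure is just the restriction of its group structure, so $p^* \underline{k[\Gamma]} = k[\Gamma]$ as right $\fM$-modules. Lemma~\ref{torpsh} then gives a natural isomorphism
$$\HR_\ast^{\rm Mon}(\Gamma, M_n(k)) \,\cong\, \Tor_\ast^{\ffgr}\!\bigl(\underline{k[\Gamma]},\, \L p_! \underline{\cO}(M_n)\bigr),$$
reducing the problem to showing that the natural comparison map $\L p_! \underline{\cO}(M_n) \to \underline{\cO}(\GL_n)$ --- obtained by adjunction from the localization $\cO(M_n) \to \cO(\GL_n)$ of coordinate rings --- is a quasi-isomorphism of left $\ffgr$-modules.

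I would verify this object-wise on $\ffgr$. By the standard coend formula for left Kan extensions, the value at $\bF_m$ is
$$(\L p_! \underline{\cO}(M_n))(\bF_m) \,\cong\, p^* \underline{k[\bF_m]} \otimes^{\L}_{\fM} \underline{\cO}(M_n),$$
and, regarding $\bF_m$ as a discrete simplicial group (hence a simplicial monoid), the right-hand side is by definition $\HR_\ast^{\rm Mon}(\bF_m, M_n(k))$. By Proposition~\ref{hrmonalg} (which uses that $k$ is a field of characteristic zero) this is in turn isomorphic to $\HR_\ast(k[\bF_m], k^n)$, the $n$-dimensional representation homology of the associative $k$-algebra $k[\bF_m]$.

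The crux is then to show that $\HR_\ast(k[\bF_m], k^n)$ is concentrated in degree $0$ and naturally equal to $\cO(\GL_n^m) = \underline{\cO}(\GL_n)(\bF_m)$. The input I would use is that the group algebra $k[\bF_m]$ is \emph{quasi-free} (formally smooth in the sense of Cuntz--Quillen) as an associative $k$-algebra: it is obtained from the free algebra $k\langle x_1,\ldots,x_m\rangle$ by universally inverting the generators $x_i$, and universal localizations preserve quasi-freeness. For any quasi-free algebra $A$, the derived representation functor $\L(\mbox{--})_n$ coincides with the underived functor, so $\HR_\ast(k[\bF_m], k^n) = \cO(\Rep_n(k[\bF_m])) = \cO(\GL_n^m)$, concentrated in degree zero; naturality in $\bF_m$ is automatic, which yields $\L p_! \underline{\cO}(M_n) \simeq \underline{\cO}(\GL_n)$ and hence \eqref{grpmon}.

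The main obstacle will be invoking the quasi-freeness of $k[\bF_m]$ together with the resulting vanishing of higher representation homology in a way that is manifestly natural in $\bF_m$; these are classical facts in Cuntz--Quillen theory but must be set up carefully to fit the algebraic-theory framework. I note that the characteristic-zero hypothesis enters only through Proposition~\ref{hrmonalg}, not through the smoothness step.
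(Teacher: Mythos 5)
Your proposal takes essentially the same route as the paper: reduce via Lemma~\ref{torpsh} to showing $\L p_! \underline{\cO}(M_n) \simeq \underline{\cO}(\GL_n)$, verify this object-wise by identifying $(\L p_!\underline{\cO}(M_n))(\bF_m)$ with $\HR_\ast^{\rm Mon}(\bF_m, M_n(k))$, translate to algebra representation homology via Proposition~\ref{hrmonalg}, and conclude from quasi-freeness of $k\langle x_1^{\pm1},\ldots,x_m^{\pm1}\rangle$ and the resulting vanishing of higher representation homology. One small inaccuracy in your closing remark: Proposition~\ref{hrmonalg} only requires $k$ to be a field, so the characteristic-zero hypothesis in fact enters precisely through the smoothness/quasi-freeness step (the paper invokes \cite{CQ95} and \cite{BFR} under this hypothesis) rather than through Proposition~\ref{hrmonalg} as you assert.
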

\begin{proof}
We have
$$
\HR_\ast^{\rm Mon}(\Gamma, M_n(k)) := \Tor^{\fM}_\ast(k[\Gamma], \underline{\cO}(M_n)) =
\Tor^{\fM}_\ast(p^{\ast}\underline{k[\Gamma]}, \underline{\cO}(M_n))
\cong {\Tor}^{\ffgr}_\ast(\underline{k[\Gamma]},\,p_!\underline{\cO}(M_n))\,,
$$
where the last isomorphism is due to Lemma \ref{torpsh}. Proposition follows now from 
Lemma \ref{lpshrk}, which we prove below.
\end{proof}
\begin{lemma} \la{lpshrk}
Let $k$ be a field of characteristic $0$. There is an isomorphism of left $\ffgr$-modules in the 
derived category $ {\mathscr D}(\Mod_k(\ffgr))$:
\begin{equation} \la{lpshomn}\L p_!\underline{\cO}(M_n) \cong \underline{\cO}(\GL_n) \ .\end{equation}
\end{lemma}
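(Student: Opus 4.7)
The plan is to construct a natural comparison map and verify it is a quasi-isomorphism by a pointwise check on each $\langle m\rangle\in\ffgr$ that reduces to a representation-homology computation for free groups. The localization $\cO(M_n)\to\cO(M_n)[\det^{-1}]=\cO(\GL_n)$, being dual to the inclusion of monoid schemes $\GL_n\hookrightarrow M_n$, is a morphism of $k$-bialgebras. It therefore induces a natural transformation $\iota\colon\underline{\cO}(M_n)\to p^{*}\underline{\cO}(\GL_n)$ of left $\fM$-modules, which by the derived adjunction $(\L p_!,p^{*})$ corresponds to a morphism $\alpha\colon\L p_!\underline{\cO}(M_n)\to\underline{\cO}(\GL_n)$ in $\mathscr{D}(\Mod_k(\ffgr))$. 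Since such a morphism is determined by its evaluations on objects, it suffices to check that $\alpha$ is a quasi-isomorphism on each $\langle m\rangle\in\ffgr$.

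The representable right $\ffgr$-module $\underline{k[F_m]}:=k[\Hom_{\ffgr}(-,\langle m\rangle)]$ is projective, so Lemma~\ref{torpsh} gives
\[
(\L p_!\underline{\cO}(M_n))(\langle m\rangle)\;\cong\;p^{*}(\underline{k[F_m]})\otimes^{\L}_{\fM}\underline{\cO}(M_n)\;=\;\HR^{\rm Mon}_*(F_m,M_n(k))\;\cong\;\HR_*(k[F_m],k^n),
\]
the last isomorphism being Proposition~\ref{hrmonalg}. The lemma therefore reduces to showing that $\HR_*(k[F_m],k^n)\cong\cO(\GL_n^m)$ is concentrated in degree~$0$.

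To establish this, take a semi-free cofibrant simplicial monoid resolution $R_\bullet\xrightarrow{\sim}F_m$ modelling the presentation of $F_m$ as the monoid on $2m$ generators $x_i,y_i$ subject to the group-completion relations $x_iy_i=y_ix_i=1$, with these relations imposed via a Koszul-type simplicial construction. Then $k[R_\bullet]\xrightarrow{\sim}k[F_m]$ is a cofibrant simplicial $k$-algebra replacement, and $(k[R_\bullet])_n$ computes $\HR_*(k[F_m],k^n)$. The main obstacle is to verify that this simplicial commutative algebra has homotopy concentrated in degree~$0$ with value $\cO(\GL_n^m)$; this requires the characteristic-zero hypothesis in an essential way, since the necessary flatness/regularity property of the ideal generated by $x_iy_i-1$ and $y_ix_i-1$ inside $\cO(M_n^{2m})$, ensuring that the derived locus cut out by these relations coincides with the underived $\GL_n^m$, breaks down in positive characteristic (consistent with the contrast between $\HR$ and $\HS$ illustrated in Remark~\ref{Rem1}).
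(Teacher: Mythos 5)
Your opening two steps are sound and essentially match the paper's strategy: you construct a comparison map $\alpha$ by adjunction from the bialgebra map $\cO(M_n)\to\cO(\GL_n)$, and then reduce pointwise via the projectivity of representables and Lemma~\ref{torpsh} and Proposition~\ref{hrmonalg} to the claim that $\HR_*(k[F_m],k^n)$ is concentrated in degree $0$ with value $\cO(\GL_n^m)$. (The explicit comparison map is a nice addition; the paper leaves this implicit and just computes the values objectwise.)

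The gap is in your final step. You propose to resolve the free group $F_m$ by imposing the relations $x_iy_i=1,\ y_ix_i=1$ as a ``Koszul-type'' simplicial construction and then appeal to regularity of the ideal generated by the matrix entries of these relations in $\cO(M_n^{2m})$. This does not work. First, these $2mn^2$ equations cut out $\GL_n^m$, which has codimension only $mn^2$ in $M_n^{2m}$, so they cannot form a regular sequence; the ideal is heavily redundant, and the derived-vanishing you want is not a consequence of a Koszul-complex argument. Second, and more fundamentally, the naive two-step noncommutative DG resolution $k\langle x_i,y_i,s_i,t_i\rangle$ with $ds_i=x_iy_i-1,\ dt_i=y_ix_i-1$ is \emph{not} a resolution of $k[F_m]$ (for instance $x_1t_1-s_1x_1$ is a nonbounding $1$-cycle already for $m=1$), so a genuine semi-free resolution has infinitely many generators in all degrees and is not controlled by those two relations alone. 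The correct mechanism, used in the paper, is entirely different: $k[F_m]=k\langle x_1^{\pm1},\dots,x_m^{\pm1}\rangle$ is \emph{quasi-free} (formally smooth in $\Alg_k$) in characteristic zero by Cuntz–Quillen \cite[Prop.~5.3(2)]{CQ95}, and together with the finiteness of its semi-free resolution this yields the vanishing $\HR_i(k[F_m],k^n)=0$ for $i>0$ by \cite[Theorem~21]{BFR}. Your intuition that characteristic zero enters here is consistent with the paper, but the proposed Koszul/regularity mechanism is not the reason, and as stated it would not close the argument.
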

\begin{proof}
Note that, for each $m \ge 0 $, there are natural isomorphisms in $ {\mathscr D}(k)$
$$
\L p_! \underline{\cO}(M_n)(\langle m \rangle)\, \cong\,  \underline{k \langle m \rangle} \otimes^{\L}_{\ffgr} \L p_! \underline{\cO}(M_n) \, \cong \, p^{\ast}\underline{k \langle m \rangle} \otimes^{\L}_{\fM} \underline{\cO}(M_n)\ .
$$
The first isomorphism above follows from the fact that  $k\langle m \rangle$ is projective as a right $\ffgr$-module; the second one is a consequence of Lemma \ref{torpsh}. Now, by Proposition \ref{hrmonalg}, we conclude
$$\H_\ast[p^{\ast}\underline{k \langle m \rangle} \otimes^{\L}_{\fM} \underline{\cO}(M_n)] \, =:\, \HR_\ast^{\rm Mon}(p^{\ast}\langle m \rangle, M_n) \,\cong\, \HR_\ast(k\langle x_1^{\pm 1},\ldots,x_m^{\pm 1} \rangle, k^n) $$
Since the algebra $\,k\langle x_1^{\pm 1},\ldots,x_m^{\pm 1} \rangle\,$ is quasi-free (formally smooth in the category $\Alg_k$), when $k$ is a field of characteristic zero (see, e.g., \cite[Prop. 5.3(2)]{CQ95}) and has a semi-free resolution with finitely many generators in each homological degree, we can use \cite[Theorem 21]{BFR} to compute
$$
\HR_i(k\langle x_1^{\pm 1},\ldots,x_m^{\pm 1} \rangle, k^n) \,=\, \begin{cases}
\, \cO[\GL_n(k)^m] & \mbox{if}\ i=0\\*[1ex]
                                                               \,   0 & \mbox{if}\ i>0 
                                                                 \end{cases}
$$
It follows that $\,\L p_! \underline{\cO}(M_n)(\langle m \rangle) \cong \cO[\GL_n(k)^m] \cong \underline{\cO}(\GL_n)(\langle m \rangle)$ for all $m \geqslant 0$. This implies \eqref{lpshomn}, completing the proof of the lemma.
\end{proof}
As a consequence of Proposition~\ref{hrmonalg} and Proposition~\ref{grpmonrh}, we get the
following result that identifies group representation homology with the representation homology of its group algebra. 
\begin{theorem} \la{grouptoalg}
Let $k$ be a field of characteristic $0$. For any $\Gamma \in \sGr$, there is a natural isomorphism
$$\HR_\ast(\Gamma,\GL_n(k)) \,\cong\, \HR_\ast(k[\Gamma],\,k^n)\ . $$
\end{theorem}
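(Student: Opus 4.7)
The plan is straightforward: chain together the two comparison results just established, namely Proposition~\ref{hrmonalg} and Proposition~\ref{grpmonrh}. Indeed, any simplicial group $\Gamma \in \sGr$ has an underlying simplicial monoid via the forgetful functor $\sGr \to \sMon$, and the monoid algebra $k[\Gamma]$ of this underlying simplicial monoid coincides with the group algebra of $\Gamma$. This is the only real conceptual step: we must verify that the object $k[\Gamma]$ appearing on the right-hand side of \eqref{monalgcomp} (as a simplicial associative algebra coming from a monoid) is literally the same as the group algebra of $\Gamma$ viewed as a simplicial $k$-algebra.

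First I would apply Proposition~\ref{grpmonrh} to obtain the identification
$$
\HR_\ast(\Gamma,\GL_n(k)) \,\cong\, \HR_\ast^{\rm Mon}(\Gamma, M_n(k))\,.
$$
Then I would apply Proposition~\ref{hrmonalg} with $N=\Gamma$ (regarded as a simplicial monoid) to obtain
$$
\HR_\ast^{\rm Mon}(\Gamma, M_n(k)) \,\cong\, \HR_\ast(k[\Gamma],\,k^n)\,.
$$
Composing these two natural isomorphisms yields the desired identification.

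The main obstacle has already been absorbed into the earlier results: it lies in Lemma~\ref{lpshrk}, which computes the derived left Kan extension $\L p_!\underline{\cO}(M_n)$ along the group-completion functor $p\colon \fM \to \ffgr$ and identifies it with $\underline{\cO}(\GL_n)$. This is precisely where characteristic zero (together with formal smoothness of the free group algebra and the vanishing of higher representation homology) is essential. Once that input is granted, the present theorem reduces to a two-step composition, and no additional work is required beyond checking naturality, which is manifest from the constructions in Section~\ref{S2.2}.
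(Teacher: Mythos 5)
Your proof is correct and follows exactly the same route as the paper: chain Proposition~\ref{grpmonrh} with Proposition~\ref{hrmonalg} (taking $N=\Gamma$ as the underlying simplicial monoid). The additional remarks about $k[\Gamma]$ being unambiguous and about the role of Lemma~\ref{lpshrk} and characteristic zero are accurate and match the paper's intent.
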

\begin{proof}
Indeed, $\,\HR_\ast(\Gamma,\GL_n(k)) \,\cong\, \HR_\ast^{\rm Mon}(\Gamma, M_n(k)) \,\cong\, \HR_\ast(k[\Gamma], k^n)\,$,
where the first isomorphism is by Proposition \ref{grpmonrh}, and the second by 
Proposition \ref{hrmonalg}.
\end{proof}
\begin{remark}
\la{Remgg}
Although not stated explicitly, the result of Theorem~\ref{grouptoalg} can be deduced from the proof of \cite[Theorem 3.1]{BRYI} for $ G = \GL_n(k) $ (see {\it loc. cit.},
Sect. 3.3). Our arguments above give a different, more direct proof of this result.
\end{remark}
\subsection{Symmetric bar construction}
\la{S2.3}
We recall (see \cite[6.1.4]{L})
that the {\it symmetric category} $\Delta S$ is defined to be the extension of $\Delta$ having the same objects as $\Delta$ (and $\Delta C$), with morphisms satisfying the two properties:
\vspace{1ex}
\begin{enumerate}
\item[(1)] For each $n \geqslant 0$, ${\rm Aut}_{\Delta S}([n]) \cong \Sigma_{n+1}^{\rm op}\,$, where $\Sigma_{n+1}$ is the $(n+1)$-th symmetric group.

\item[(2)] Any morphism $f:[n] \to [m]$ in $\Delta S$ can be factored uniquely as the composite $f=g \circ \sigma $ with $g \in {\rm Hom}_{\Delta}([n],[m])$ and $\sigma \in {\rm Aut}_{\Delta S}([n]) \cong \Sigma_{n+1}^{\rm op}$.
\end{enumerate}

\vspace*{1ex}

It is convenient use the following notation for the morphisms in $\Delta S$ (see \cite[Sect. 1.1]{Au1}). Any $\,f:\, 
[n] \xrightarrow{\sigma} [n] \xrightarrow{g} [m]\,$ in $ \Delta S $ can be written uniquely as the `tensor product' of $m+1$ noncommutative monomials $\,X_0,\,X_1,\,\ldots,X_m\, $ in $n+1$ formal variables $\{x_0,x_1,\ldots,x_n\}$:
\begin{equation} \la{s4.2e4} f= X_0 \otimes X_1 \otimes \ldots \otimes X_m \end{equation}
in such a way that each $X_i$ is the product $\,x_{i_1}x_{i_2} \ldots x_{i_r}\,$ of $r=|f^{-1}(i)|$ variables whose indices $ i_k $ occur in the fibre $f^{-1}(i)$ and 
that are ordered in the same way as numbers in $\{\sigma(0),\ldots,\sigma(n)\}$, i.e., $\sigma(i_1) < \sigma(i_2) < \ldots < \sigma(i_r)$. 

Using the above notation, the {\it symmetric bar construction} of an algebra $A \in \Alg_k$ is defined as the functor $\,B_{\rm sym}A: \Delta S \to \Mod_k\,$ that is given on objects by
\begin{equation}
\la{symbarob} 
B_{\rm sym}A([n]) = A^{\otimes (n+1)}\ ,\quad \forall\,n\ge 0\,, 
\end{equation}
and on morphisms by the following `substitution formula': 
\begin{equation} \la{symbarmor} 
B_{\rm sym}A(f)(a_0 \otimes \ldots \otimes a_n) \,:=\, (a_{i_1}\ldots a_{i_r}) \otimes \ldots \otimes (a_{k_1} \ldots a_{k_s})\ . \end{equation}
where $ f \in \Hom_{\Delta S}([n], [m])$ is represented by
$\,f = (x_{i_1}\ldots x_{i_r}) \otimes \cdots \otimes (x_{k_1} \ldots x_{k_s})\,$
({\it cf.} \cite[6.1.12]{L}). 

The symmetric bar construction extends naturally to differential graded and simplicial $k$-algebras: for $A \in \DGA_k$, we have $B_{\rm sym}A \in {\rm dgMod}_k(\Delta S) \cong {\rm Ch}({\rm Mod}_k(\Delta S))$ and for $ A \in \sAlg_k$, $\,B_{\rm sym}A \in {\rm sMod}_k(\Delta S) \cong {\rm Ch}({\rm Mod}_k(\Delta S))$, where in the last case the identification is given via the Dold-Kan normalization functor $N$. In this way, using
the same formula \eqref{HSA}, we can extend the definition of  symmetric homology
$\HS_*(A) $ to both DG and simplicial algebras.

For cocommutative Hopf algebras, we can give the following interpretation 
of the symmetric bar construction. By \cite[Lemma 4.2]{BR22}, there is a natural functor
\begin{equation} \la{s4.2e5} \Psi_{\rm sym}:\ \Delta S \to \ffgr^{\rm op}\end{equation}
defined on objects by  $\Psi_{\rm sym}([n]) = \langle n+1 \rangle $ and on morphisms by the following formula: if $ f \in \Hom_{\Delta S}([n], [m])$ is represented by $
f = (x_{i_1}\ldots x_{i_r}) \otimes \cdots \otimes (x_{k_1} \ldots x_{k_s})\ ,
$, then 
\begin{equation} \la{s4.2e6} \Psi_{\rm sym}(f): \langle m+1 \rangle \to \langle n+1 \rangle \,,\qquad X_0 \mapsto x_{i_1} \ldots x_{i_r}\,,\ \ldots\ ,\,X_m \mapsto x_{k_1} \ldots x_{k_s}\ ,
\end{equation}
where $\langle m+1 \rangle = \mathbb{F}\langle X_0,\ldots,X_m \rangle$ and $\langle n+1 \rangle = \mathbb{F}\langle x_0,\ldots,x_n\rangle$. Now, every (simplicial/DG) cocommutative Hopf algebra $\cA$ defines DG right $\ffgr$-module $\underline{\cA}$ by formula \eqref{rgmod}. As in the case of group algebras (see \cite[Lemma 3.3]{BR22}), it is straightforward to verify that
\begin{equation} 
\la{pullback} B_{\rm sym}\cA \,=\, \Psi_{\rm sym}^{\ast}(\underline{\cA}) \ ,
\end{equation}
where $\Psi_{\rm sym}^{\ast}$ denotes the restriction functor via \eqref{s4.2e5}. 

\section{Proofs of Main Results} 
\la{S3}
\subsection{Derived character maps}
\la{S3.1}
We now construct derived character maps relating symmetric homology to representation homology. As in the Introduction, we will work with simplicial algebras.

Let $R \stackrel{\sim}{\to} A$ be a semi-free simplicial resolution of $A \in  \sAlg_k$. Applying the unit of the adjunction \eqref{adj2} to $R$, we obtain a map of simplicial $k$-algebras $\,\pi_n: R \to M_n[(R)_n]\,$. Composing $\pi_n$ with the trace $\,M_n[(R)_n] \to (R)_n\,$ gives a map of simplicial $k$-modules $\,R \to (R)_n \,$. The latter
factors through the quotient $\,R/[R,R]\,$, inducing the simplicial $k$-linear map
\begin{equation}\la{TrR}
\Tr_n(R):\ R/[R,R] \to (R)_n \ .
\end{equation}
\begin{lemma} \la{hc0}
There is a natural isomorphism of simplicial $k$-modules 
$$
k \otimes_{\Delta C^{\rm op}} B^{\rm cyc}R\, \cong\, R/[R,R] \ .
$$
\end{lemma}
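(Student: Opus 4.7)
The plan is to interpret $k \otimes_{\Delta C^{\rm op}}(\mbox{--})$ as the colimit functor on $\Delta C^{\rm op}$-diagrams of $k$-modules and to construct an explicit universal cocone. Since the cyclic bar construction $B^{\rm cyc}R$ is built simplicial-degreewise from the associative algebras $R_p := R([p]) \in \Alg_k$, and the functor tensor product over $\Delta C^{\rm op}$ contracts only the cyclic direction, it suffices to fix $p \ge 0$, set $A := R_p$, and prove the ordinary-algebra statement $\colim_{\Delta C^{\rm op}} B^{\rm cyc}A \cong A/[A,A]$ naturally in $A \in \Alg_k$; naturality in $p$ will then assemble these into the desired isomorphism of simplicial $k$-modules.

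For the universal cocone I would take
$$
\iota_n\,:\, A^{\otimes(n+1)} \longrightarrow A/[A,A], \qquad a_0 \otimes \cdots \otimes a_n \,\longmapsto\, \overline{a_0 a_1 \cdots a_n}.
$$
Every morphism in $\Delta C$ factors uniquely as an arrow in $\Delta$ composed with a cyclic automorphism, so compatibility of $(\iota_n)$ with the full $\Delta C^{\rm op}$-structure reduces to checking it against (a) the Hochschild face and degeneracy operators on $B^{\rm cyc}A$, and (b) the cyclic generator $t_n$. In (a), the only substantive point is the wrap-around face $d_n(a_0 \otimes \cdots \otimes a_n) = a_n a_0 \otimes a_1 \otimes \cdots \otimes a_{n-1}$, which produces the product $a_n a_0 a_1 \cdots a_{n-1}$; in (b), the action $t_n(a_0 \otimes \cdots \otimes a_n) = a_n \otimes a_0 \otimes \cdots \otimes a_{n-1}$ produces the same product. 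Both reduce to the single classical observation that $\overline{a_n a_0 \cdots a_{n-1}} = \overline{a_0 a_1 \cdots a_n}$ in $A/[A,A]$ by cyclicity of the product modulo commutators.

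For universality, suppose $\tau_\bullet: B^{\rm cyc}A \Rightarrow L$ is any $\Delta C^{\rm op}$-cocone. The two face maps $d_0, d_1: A^{\otimes 2} \to A$ will force $\tau_0(ab) = \tau_1(a \otimes b) = \tau_0(ba)$, so $\tau_0$ descends uniquely to a map $\phi: A/[A,A] \to L$; then the iterated face $d_0^{\,n}: A^{\otimes(n+1)} \to A$, $a_0 \otimes \cdots \otimes a_n \mapsto a_0 a_1 \cdots a_n$, forces $\tau_n = \tau_0 \circ d_0^{\,n} = \phi \circ \iota_n$, with consistency across the remaining faces, degeneracies, and cyclic operators guaranteed by the simplicial identities together with the cyclic invariance in $A/[A,A]$ already used above. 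The only conceptual input is this cyclic invariance, so I do not anticipate a serious obstacle: the statement is essentially the familiar identity $\HC_0(A) = A/[A,A]$, refined to the level of the underlying functor $\Delta C^{\rm op} \to \Mod_k$ prior to taking $H_0$.
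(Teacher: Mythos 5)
Your proof is correct and takes essentially the same route as the paper: reduce degree-wise to ordinary algebras $R_p$, then identify $k\otimes_{\Delta C^{\rm op}}B^{\rm cyc}R_p$ with $R_p/[R_p,R_p]$. The paper simply invokes formula \eqref{HCA} together with the classical fact $\HC_0(B)\cong B/[B,B]$ at this last step, while you unwind that fact by exhibiting the colimit over $\Delta C^{\rm op}$ as a universal cocone --- a more self-contained presentation of the same computation.
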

\begin{proof}
It suffices to check that $\,k \otimes_{\Delta C^{\rm op}} B^{\rm cyc}R_m \cong R_m/[R_m, R_m]\,$ for each $m$, but this follows immediately from formula \eqref{HCA} and the fact that $\HC_0(B) \cong B/[B,B]$ for any $B \in \Alg_k$. 
\end{proof}
By Lemma \ref{hc0}, we obtain the following composition of morphisms in $\mathscr{D}(k)$, where ${\rm C}$ stands for the associated chain complex:
\begin{equation} \la{derivedch1} k \otimes^{\L}_{\Delta C^{\rm op}} {\rm C}(B^{\rm cyc}R) \to  k \otimes_{\Delta C^{\rm op}} {\rm C}(B^{\rm cyc}R) \cong {\rm C}(R/[R,R]) \, \xrightarrow{\Tr_n(R)}\, {\rm C}[(R)_n] \,,
\end{equation}
where the last map is defined by \eqref{TrR}. Note that, if $k$ is a field, $B^{\rm cyc}R$ is weakly equivalent to $B^{\rm cyc}A$ by K\"{u}nneth's Theorem. In this case, \eqref{derivedch1} induces on homology a natural graded $k$-linear map
\begin{equation} 
\la{derivedch2}
\Tr_n(A)_\ast: 
\HC_\ast(A) \to \HR_\ast(A,k^n)\ , 
\end{equation}
which is the {\it derived character map} \eqref{Tr*}  referred to in the Introduction.
On the other hand, there is an inclusion functor $i:\Delta C^{\rm op} \hookrightarrow \Delta S$ defined in  \cite[6.1.11]{L}, for which it is easy to check 
\begin{equation} \la{cyctosym}B^{\rm cyc}A = i^{\ast} B_{\rm sym} A  \end{equation}
for any algebra $A \in \sAlg_k$. Hence, the functor $i$ induces a natural map 
\begin{equation} \la{hctohs} i^{\ast}:\ \HC_\ast(A) = {\rm Tor}^{\Delta C^{\rm op}}_{\ast}(k, B^{\rm cyc}A) \,\to\, {\rm Tor}^{\Delta S}_\ast(k, B_{\rm sym}A) = {\rm HS}_\ast(A)\ .\end{equation}
relating cyclic homology to symmetric homology. This is the map \eqref{HCS} in the Introduction.

Next, we recall that, for any algebra $ A \in \Alg_k$,
Ault \cite[Theorem 4.1]{Au2} constructed a graded commutative $k$-algebra structure on $\HS_\ast(A)$. The following lemma extends this structure to simplicial algebras, i.e., to any $A \in \sAlg_k$.
\begin{lemma} \la{algstr}
Ault's structure of a graded commutative algebra on ${\rm HS}_\ast(A)$ extends naturally to simplicial $k$-algebras $($defined over an arbitrary commutative ring $k)$.  
\end{lemma}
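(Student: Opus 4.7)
The plan is straightforward: Ault's construction of the graded commutative algebra structure on $\HS_\ast(A)$ for $A \in \Alg_k$ is built out of natural chain-level operations on $B_{\rm sym}A$ that make no essential use of the fact that $A$ is a discrete algebra, and therefore extends to simplicial algebras by combining the Dold--Kan correspondence with the Eilenberg--Zilber theorem. More precisely, Ault's product arises from two functorial pieces of data: (a) a lax symmetric monoidal structure on the functor $B_{\rm sym}$, expressed as a natural transformation $B_{\rm sym}(A) \otimes B_{\rm sym}(B) \to B_{\rm sym}(A \otimes B)$ of $\Delta S$-modules (objectwise tensor), coming from the canonical isomorphism $(A \otimes B)^{\otimes (n+1)} \cong A^{\otimes (n+1)} \otimes B^{\otimes (n+1)}$; and (b) the algebra multiplication $\mu_A: A \otimes A \to A$. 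Both are defined functorially on $k$-modules and pass to simplicial algebras by degreewise application.

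The first step is to isolate, from \cite{Au1, Au2}, a chain-level presentation of Ault's product realizing it in terms of (a) and (b) above. Once this is done, for $A \in \sAlg_k$ one forms the composition
$$
N(B_{\rm sym}A) \otimes N(B_{\rm sym}A) \xrightarrow{\nabla}\, N(B_{\rm sym}A \otimes B_{\rm sym}A) \,\to\, N(B_{\rm sym}(A \otimes A)) \,\xrightarrow{N(B_{\rm sym}\mu_A)}\, N(B_{\rm sym}A)
$$
in $\Ch(\Mod_k(\Delta S))$, where $\nabla$ is the Eilenberg--Zilber shuffle map. Tensoring with a projective resolution $P_\bullet \to k$ of the trivial $\Delta S^{\rm op}$-module and passing to homology yields the desired product on $\HS_\ast(A) = \H_\ast(P_\bullet \otimes_{\Delta S} N(B_{\rm sym}A))$.

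Associativity and unitality then follow from Ault's corresponding properties applied in each simplicial degree together with the strict associativity of the shuffle product; graded commutativity follows from Ault's graded commutativity combined with the classical homotopy-commutativity of $\nabla$. Naturality in $A$ and independence of the choice of resolution $P_\bullet$ are formal from the derived-functor set-up. Alternatively, one can choose a semi-free simplicial resolution $R \xrightarrow{\sim} A$ with each $R_m \in \Alg_k$ and apply Ault's product degreewise to $B_{\rm sym}R$, assembling via the total complex of the associated bisimplicial $\Delta S$-module; standard acyclic-models comparison shows this agrees with the construction above.

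The only genuine obstacle is the first step: namely, pinning down a chain-level presentation of Ault's product that is manifestly $k$-linear and functorial on $\Mod_k$-valued input (rather than tied to a specific simplicial model that invokes the discrete algebra structure). Once such a presentation is fixed, the extension to $\sAlg_k$ is purely formal, in exact analogy with the well-known passage of the shuffle product on Hochschild homology from associative to simplicial associative algebras.
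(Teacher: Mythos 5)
Your proposal never actually establishes the thing you yourself identify as ``the only genuine obstacle'': a chain-level presentation of Ault's product of the form (a) lax symmetric monoidal structure on $B_{\rm sym}$ plus (b) $\mu_A$. This is not a minor omission --- it is the content of the lemma. Worse, the presentation you propose is almost certainly not how Ault's Pontryagin product arises: the objectwise map $B_{\rm sym}A([n]) \otimes B_{\rm sym}A([n]) \cong (A\otimes A)^{\otimes(n+1)} \to A^{\otimes(n+1)}$ that you describe has the wrong shape. Ault's $E_\infty$-structure (and hence the product) lives on $CA = k[N(\mbox{--}\downarrow \Delta S_+)] \otimes_{\Delta S_+} B_{{\rm sym},+}A$ and comes from a concatenation pairing $\Delta S_+ \times \Delta S_+ \to \Delta S_+$ acting through the nerve, together with a Barratt--Eccles-type operad $\mathcal{D}_{\bf Mod}$ acting on $CA$; it is not a composite of an objectwise lax monoidal map with $\mu_A$. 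So the argument as written does not go through.

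The paper's proof sidesteps the need to dissect the product entirely. It treats Ault's construction as a black box: $C(-)_\ast$ is a functor $\Alg_k \to \mathrm{sE}_\infty\Alg_k$, so applying it degreewise to a simplicial algebra $A$ yields a bisimplicial $E_\infty$-algebra $CA$, whose diagonal is a simplicial $E_\infty$-algebra. The only thing that remains to check is that $\pi_\ast$ of this diagonal recovers $\HS_\ast(A)$, and this follows because $k[N(\mbox{--}\downarrow\Delta S_+)]$ is a levelwise projective resolution of $k$ as a right $\Delta S_+$-module, together with the comparison of $\Tor^{\Delta S_+}$ with $\Tor^{\Delta S}$ from \cite[Theorem 31]{Au1}. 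Note in particular that the paper applies $C$ directly to $A$ --- no semi-free resolution, no Eilenberg--Zilber gymnastics, and no acyclic-models comparison are needed. Your ``alternative'' sketch (degreewise application plus total complex) is in the right spirit but still carries unnecessary machinery (the resolution $R$, the acyclic-models argument) and, more importantly, is presented as a consistency check rather than as the actual proof.
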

\begin{proof}
Recall (see \cite[Sect. 3]{Au2}) that Ault constructed a functor $C(\mbox{--})_\ast:\Alg_k \to \mathrm{sE}_\infty\Alg_k$ such that  $\pi_\ast[CA_\ast] \cong {\rm HS}_\ast(A)$ for any
$A \in \Alg_k$. Here, $\mathrm{sE}_\infty\Alg_k$ denotes the category of $E_{\infty}$-algebra objects in the category of simplicial $k$-modules (which we shall refer to as {\it simplicial $E_{\infty}$-algebras}). Here, the term `$E_{\infty}$-algebra' refers to an algebra over the simplicial $E_{\infty}$-operad $\mathcal{D}_{\bf Mod}$ of \cite[Remark 2.8]{Au2}. Now let $A \in \sAlg_k$ be an arbitrary simplicial $k$-algebra. Then, $CA$ is a bisimplicial $E_\infty$-algebra whose diagonal $k$-module is an $E_{\infty}$-algebra. Let $\Delta S_+$ be the category obtained by adjoining an initial object $[-1]$ to $\Delta S$. Now recall that, by definition,  
$$CA = k[N(\mbox{--}\downarrow \Delta S_+)] \otimes_{\Delta S_+} B_{\rm sym,+}A \,,
$$
where $N(\mbox{--})$ stands for the simplicial nerve functor and $B_{\rm sym,+}A$ is the extension of $B_{\rm sym}A$ to a $\Delta S_+$-module as in \cite[Def. 30]{Au1}. Since the simplicial right $\Delta S_+$-module $k[N(\mbox{--}\downarrow \Delta S_+)]$ is a levelwise projective resolution of $k$, the homology of (the Dold-Kan normalization of) the diagonal of $CA$ is precisely
$${\rm Tor}^{\Delta S_+}_\ast(k, B_{\rm sym,+}A)\, \cong\, {\rm Tor}^{\Delta S}_\ast(k, B_{\rm sym}A) = : \HS_\ast(A)\ . $$
The above isomorphism  follows from \cite[Theorem 31]{Au1}. This proves the desired lemma.
\end{proof}

Now, we come to the main result of this section.
\begin{proposition} \la{factorTr}
Assume that $k$ is a field. The one-dimensional derived character map \eqref{derivedch2} factors through \eqref{hctohs}:
\begin{equation*}
 \la{HSR2} 
 \begin{diagram}[small]
 \HC_*(A)& & \rTo^{\ \Tr(A)_\ast\ } & & \HR_*(A, k)\\
& \rdTo_{\iota^*} &                         & \ruDotsto &\\
&                  & \HS_*(A)        & &
 \end{diagram}
 \end{equation*}
inducing a {\rm homomorphism} of graded commutative $k$-algebras with respect to the Ault structure on $\HS_*(A)$:
\begin{equation}
\la{SRAA}
\SR(A)_*:\ \HS_\ast(A)\, \to\, \HR_*(A, k) \ 
\end{equation}
\end{proposition}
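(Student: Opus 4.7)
The plan is to build the chain-level map underlying $\SR(A)_*$ as the composition of three natural maps (abelianization, multiplication, collapse), verify the factorization through $i^*$ by a diagram chase, and obtain multiplicativity from Ault's simplicial $E_\infty$-algebra model.

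For the construction, fix a semi-free simplicial resolution $R \xrightarrow{\sim} A$ and let $\pi: R \to (R)_1$ denote the unit of the abelianization adjunction. The morphism $\pi$ induces $B_{\rm sym}(\pi): B_{\rm sym}R \to B_{\rm sym}((R)_1)$ in $\Mod_k(\Delta S)$. Since $(R)_1$ is commutative, the simultaneous multiplication maps $(R)_1^{\otimes (n+1)} \to (R)_1$ assemble into a $\Delta S$-equivariant morphism $\mu: B_{\rm sym}((R)_1) \to \underline{(R)_1}$ to the constant $\Delta S$-diagram. The $\Delta S$-equivariance is immediate from the substitution formula \eqref{symbarmor} together with commutativity: for any $f:[n] \to [m]$ in $\Delta S$, the composite $\mu_m \circ B_{\rm sym}((R)_1)(f)$ reassembles the total product $a_0 a_1 \cdots a_n$, independent of $f$, since the fibres of $f$ partition $\{0, \ldots, n\}$. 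Applying $k \otimes^{\L}_{\Delta S} (\mbox{--})$ and composing with the canonical collapse
\[
k \otimes^{\L}_{\Delta S} C(\underline{(R)_1}) \,\to\, k \otimes_{\Delta S} C(\underline{(R)_1}) \,=\, C((R)_1)\,,
\]
where the equality uses that $N(\Delta S)$ is connected so that a constant $\Delta S$-diagram has colimit equal to its value, yields the desired chain-level morphism; its homology is $\SR(A)_*$.

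For the factorization, I run the analogous construction with $\Delta C^{\rm op}$ in place of $\Delta S$. By the identity $B^{\rm cyc}A = i^* B_{\rm sym}A$ from \eqref{cyctosym} and the naturality of $\pi$, $\mu$, and the collapse map in the indexing category, restriction along $i:\Delta C^{\rm op} \hookrightarrow \Delta S$ intertwines the two constructions, producing a commuting rectangle whose horizontal composites are the derived character map of \eqref{derivedch1} (bottom) and $\SR(A)_* \circ i^*$ (top). The comparison rests on the observation that, for $n = 1$, the trace $\Tr_1(R)$ of \eqref{TrR} coincides with the isomorphism $R/[R,R] \xrightarrow{\sim} (R)_1$ induced by $\pi$, so that the derived character map unfolds precisely into the composite ``$B^{\rm cyc}\pi$, then multiplication, then collapse'' built above. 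Passing to homology gives $\Tr(A)_* = \SR(A)_* \circ i^*$.

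For multiplicativity, I invoke Ault's construction from Lemma~\ref{algstr}: the Ault product on $\HS_*(A)$ is induced by a simplicial $E_\infty$-algebra structure on a chain model computing $\HS_*(A)$. The commutative algebra $(R)_1$ is canonically an $E_\infty$-algebra along the operad map $\mathcal{D}_{\bf Mod} \to \Comm$, and each of the three building blocks of $\SR(A)_*$ refines to a morphism of simplicial $E_\infty$-algebras: $B_{\rm sym}(\pi)$ by functoriality of Ault's construction in the algebra variable; $\mu$ because strict commutative multiplication defines an $E_\infty$-algebra map from $B_{\rm sym}((R)_1)$ to $(R)_1$ along $\mathcal{D}_{\bf Mod} \to \Comm$; and the collapse map because it is induced by the augmentation intrinsic to Ault's construction. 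Taking $\pi_*$ yields the required graded commutative algebra homomorphism. The main obstacle is precisely this operadic refinement: verifying that Ault's functor sends the algebra maps $\pi$ and $\mu$ to $E_\infty$-algebra morphisms compatible with the multiplication collapse to $(R)_1$. Once this is granted, the algebra statement for $\SR(A)_*$ follows formally from homotopy invariance of $E_\infty$-algebras.
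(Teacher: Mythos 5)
Your construction of the chain-level map and the factorization through $i^*$ are sound and, after unwinding, coincide with the paper's argument. You decompose the map into three pieces (applying $B_{\rm sym}$ to the abelianization $\pi: R \to (R)_1$, then the multiplication $\mu$ to the constant $\Delta S$-diagram, then the collapse $k \otimes^{\L}_{\Delta S} \to k\otimes_{\Delta S}$), whereas the paper directly uses the canonical map $k \otimes^{\L}_{\Delta S} C(B_{\rm sym}R) \to k\otimes_{\Delta S} C(B_{\rm sym}R)$ together with Ault's identification $k\otimes_{\Delta S} B_{\rm sym}R \cong R_{\rm ab}$ from \cite[Theorem 86]{Au1}. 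These are the same map; your $\Delta S$-equivariance check for $\mu$ is exactly the content of Ault's Theorem~86. The comparison with $\Tr_1(R)$ and the restriction along $i: \Delta C^{\rm op} \hookrightarrow \Delta S$ also match the paper's commuting square, so the factorization $\Tr(A)_* = \SR(A)_* \circ i^*$ is established.

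The genuine gap is in the multiplicativity step, and you identify it yourself: you say ``the main obstacle is precisely this operadic refinement,'' but do not actually close it. Your claims that $\mu$ ``defines an $E_\infty$-algebra map from $B_{\rm sym}((R)_1)$ to $(R)_1$'' and that the collapse ``is induced by the augmentation intrinsic to Ault's construction'' are not well-formed as stated ($B_{\rm sym}((R)_1)$ is a $\Delta S$-module, not an $E_\infty$-algebra, so the assertion needs to be rerouted through Ault's functor $C(\mbox{--})_*$), and more importantly no argument is supplied. The missing ingredient is exactly the one the paper cites: by \cite[Proposition 4.4]{Au2}, the natural map $C(B) \to \HS_0(B) \cong B_{\rm ab}$ is a morphism of $E_\infty$-algebras for any $B \in \Alg_k$. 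Applying this levelwise to $R_i$ and taking diagonals yields the simplicial $E_\infty$-algebra map ${\tt diag}(CR) \to R_{\rm ab}$, which is precisely the chain-level refinement you need. Without invoking (or reproving) that result, the algebra-homomorphism claim remains unestablished; with it, your remaining ``Once this is granted\ldots'' step is indeed formal.
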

\begin{proof}
Given an algebra $A$, we can construct the map \eqref{SRAA} as follows. Fix a semi-free simplicial resolution $R \stackrel{\sim}{\to} A$ in $\sAlg_k $ and consider the natural map
\begin{equation}\la{dttot} k \otimes^{\L}_{\Delta S} {\rm C}(B_{\rm sym} R) \to  k \otimes_{\Delta S} {\rm C}(B_{\rm sym} R)  \end{equation}
where ${\rm C}$ stands for the associated chain complex. By the K\"{u}nneth Theorem, $B_{\rm sym}R \xrightarrow{\sim} B_{\rm sym}A$ is a weak equivalence. Hence $\H_\ast[k \otimes^{\L}_{\Delta S} {\rm C}(B_{\rm sym} R)] \cong \HS_\ast(A)$. By \cite[Theorem 86]{Au1}, there is a natural isomorphism of simplicial $k$-vector spaces $k \otimes_{\Delta S} B_{\rm sym} R \cong R_{\rm ab} =: (R)_1$. Hence $\H_\ast[k \otimes_{\Delta S} {\rm C}(B_{\rm sym} R)] \cong \HR_\ast(A,1)$.Thus, \eqref{dttot} induces on homology a graded $k$-linear map $\SR_\ast(A):\HS_\ast(A) \to \HR_\ast(A,1)$. 
 
 Next, note that the map $\Tr_1(R):R/[R,R] \to (R)_1$ inducing $\Tr(A)_\ast$ is just the canonical quotient to the abelianization, and the map $i^{\ast}: k \otimes_{\Delta C^{\rm op}} B^{\rm cyc}R \to k \otimes_{\Delta S} B_{\rm sym} R$ coincides with $ \Tr_1(R) $. Hence the following diagram commutes in 
 $ {\mathscr D}(k) $: 
 $$  
 \begin{diagram}[small]
   k \otimes^{\L}_{\Delta C^{\rm op}} {\rm C}(B^{\rm cyc}R)  & \rTo^{i^{\ast}} & k \otimes^{\L}_{\Delta S} {\rm C}(B_{\rm sym} R)\\
    \dTo & & \dTo\\
   k \otimes_{\Delta C^{\rm op}} {\rm C}(B^{\rm cyc}R) & \rTo^{\Tr_1(R)} &  k \otimes_{\Delta S} {\rm C}(B_{\rm sym} R)\\
 \end{diagram}
 $$
 which gives the natural isomorphism $ \SR_*(A) \circ i^* \cong \Tr(A)_* $ on homology.
 
 It remains to prove that the map \eqref{SRAA} constructed above is an algebra homomorphism. To this end,
consider the bisimplicial $E_{\infty}$-algebra $C{R}_\ast$, where $C(\mbox{--})_\ast:\Alg_k \to \mathrm{sE}_\infty\Alg_k$ is Ault's functor (see the proof of Lemma \ref{algstr}). The total (i.e. diagonal) simplicial vector space of $CR_\ast$ is a (simplicial) $E_{\infty}$-algebra. By (the proof of) \cite[Lemma 3.1]{Au2}, 
$$(C{R})_i\ \cong \bigoplus_{[n] \stackrel{\rm id}{\to} [n] \stackrel{f_1}{\to} [n_1] \stackrel{f_2}{\to} \ldots \stackrel{f_i}{\to} [n_i]}  {R}^{\otimes i}\,, $$
where the direct sum runs over the chains of morphisms $[n] \stackrel{\rm id}{\to} [n] \stackrel{f_1}{\to} [n_1] \stackrel{f_2}{\to} \ldots \stackrel{f_i}{\to} [n_i]$ of length $i+1$ in the category $\Delta S_+$ obtained by adjoining an initial object $[-1]$ to $\Delta S$.  Since
$$ (C{R})_i\ \cong \bigoplus_{[n] \stackrel{\rm id}{\to} [n] \stackrel{f_1}{\to} [n_1] \stackrel{f_2}{\to} \ldots \stackrel{f_i}{\to} [n_i]}  {R}^{\otimes i}\,,$$
and
$$ (C{A})_i\ \cong \bigoplus_{[n] \stackrel{\rm id}{\to} [n] \stackrel{f_1}{\to} [n_1] \stackrel{f_2}{\to} \ldots \stackrel{f_i}{\to} [n_i]}  {A}^{\otimes i}\,,$$
it follows from the K\"{u}nneth theorem that $(CR)_i \to (CA)_i$ is a weak equivalence of simplicial $E_{\infty}$-algebras for all $i$.  Hence, ${\tt diag}(C{R})_\ast \xrightarrow{\sim} {\tt diag}(CA)_\ast$ is a quasi-isomorphism of $E_{\infty}$-algebras. On the other hand, the natural map $C(R_i) \to \HS_0(R_i) \cong (R_i)_{\rm ab}$ is an $E_{\infty}$-algebra homomorphism by \cite[Proposition 4.4]{Au2}, where $R_i$ is the algebra of $i$-simplices on $R$. 
Hence the map ${\tt diag}(CR) \to R_{\rm ab}$ is a map of simplicial $E_{\infty}$-algebras.
Since the map $C(R_i) \to \HS_0(R_i) \cong (R_i)_{\rm ab}$ represents the map $k \otimes^{\L}_{\Delta S} B_{\rm sym}R_i \to k \otimes_{\Delta S} B_{\rm sym}R_i$, the map induced on homology (of the associated chain complexes) by the map ${\tt diag}(CR) \to R_{\rm ab}$ is the same as that induced on homologies by \eqref{dttot}, which is $\SR_\ast(A)$. Since the map ${\tt diag}(CR) \to R_{\rm ab}$ is a homomorphism of simplicial $E_{\infty}$-algebras, $\SR_\ast(A)$ is a morphism of graded commutative algebras. This finishes the proof of the proposition.
\end{proof}
\begin{remark} \la{generalization}
The result of Proposition \ref{factorTr} holds true in greater generality: for any algebra $A$ that is flat over a commutative ring $k$. This last condition is automatic when $k$ is a field.
\end{remark}
\begin{remark}
\la{Nogo}
In general, for $n>1$, the derived character map $ \Tr_n(A)_\ast: \HC_*(A) \to \HR_*(A, k^n) $ does not seem to factor through $\HS_*(A)$. 
\end{remark}
%

\subsection{Symmetric homology vs representation homology} 
\la{S3.2}
{\it From now on, we will assume that $k$ is a field of characteristic $0$.}
Theorem \ref{T1} stated in the Introduction  is a special case of the following more general result.
\begin{theorem} \la{T1a}
For any simplicial algebra $A \in \sAlg_k$, the map $\SR_\ast(A)$ of Proposition~\ref{factorTr} is an isomorphism of graded commutative $k$-algebras.
\end{theorem}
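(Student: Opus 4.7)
The plan is to fix a semi-free simplicial resolution $R \xrightarrow{\sim} A$ in $\sAlg_k$ and reduce the theorem to a vanishing statement for certain higher $\Tor$ groups. Since $k$ is a field, K\"unneth shows that $B_{\rm sym} R \to B_{\rm sym} A$ is a levelwise weak equivalence of $\Delta S$-modules, so $\HS_\ast(A) \cong \H_\ast(k \otimes^{\L}_{\Delta S} {\rm C}(B_{\rm sym} R))$. On the other hand, by \cite[Theorem~86]{Au1}, the underived tensor product $k \otimes_{\Delta S} B_{\rm sym} R$ is naturally isomorphic to the abelianization $R_{\rm ab} = (R)_1$, whose homotopy computes $\HR_\ast(A,k)$ by \eqref{RHDefAlg}. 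As in the proof of Proposition~\ref{factorTr}, $\SR_\ast(A)$ is precisely the map induced on homology by the natural comparison
$$k \otimes^{\L}_{\Delta S} {\rm C}(B_{\rm sym} R) \,\longrightarrow\, k \otimes_{\Delta S} {\rm C}(B_{\rm sym} R),$$
so it suffices to prove this comparison is a quasi-isomorphism for every semi-free $R$, equivalently that $\Tor^{\Delta S}_i(k, B_{\rm sym} R) = 0$ for all $i > 0$. This is essentially the content of the key Proposition~\ref{pC}.

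I would first verify the vanishing for a free simplicial tensor algebra $R = T(V)$, where the underived target is the symmetric algebra $S(V)$ concentrated in degree zero, so the claim reduces to $\HS_\ast(T(V)) \cong S(V)$. This computation can be carried out in two complementary ways: either algebraically, by identifying $B_{\rm sym} T(V) \otimes^{\L}_{\Delta S} k$ with the derived free $E_\infty$-algebra on $V$ (which in characteristic $0$ is formal and quasi-isomorphic to the free commutative algebra $S(V)$, since the Barratt--Eccles operad is rationally equivalent to $\mathrm{Com}$), or topologically, via a group-completion-type identification with $\SP^\infty$- and $\Omega^\infty\Sigma^\infty$-type constructions on $V$ whose rational homotopy collapses to $S(V)$. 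These should yield the two independent proofs of Proposition~\ref{pC} mentioned in the introduction.

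To pass from free algebras to an arbitrary semi-free $R$, the strategy is to use the skeletal filtration of $R$: each stage is obtained by attaching free simplicial algebras via pushouts along cofibrations. The functor $\HR_\ast(-,k) = \L(-)_1$ sends such pushouts to homotopy pushouts in $\sComm_k$ by the Quillen adjunction \eqref{adj2}; on the symmetric side, the functorial properties of $B_{\rm sym}$ together with standard properties of $\Tor^{\Delta S}$ give the analogous behavior. The spectral sequences associated to the two filtrations are then compared via $\SR_\ast$, reducing the general case to the free one.

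Finally, to promote $\SR_\ast(A)$ to a homomorphism of graded commutative algebras I would reuse Ault's $E_\infty$-model $C(R)_\ast$ from the proof of Proposition~\ref{factorTr}: the comparison map is realized by an honest $E_\infty$-algebra homomorphism $\mathrm{diag}(C R) \to R_{\rm ab}$, so Ault's product on $\HS_\ast(A)$ is transported to the standard graded-commutative product on $\HR_\ast(A,k)$. The main obstacle is Proposition~\ref{pC} itself---isolating the correct characteristic-zero input (formality of the Barratt--Eccles operad, or an equivariant contractibility argument for an appropriate simplicial set over $\Delta S$) that forces the vanishing of the higher $\Delta S$-Tor groups of the symmetric bar construction of a free simplicial algebra.
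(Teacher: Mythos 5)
Your overall framing is on target: you correctly identify that the theorem reduces to showing the natural comparison map $k \otimes^{\L}_{\Delta S} {\rm C}(B_{\rm sym} R) \to k \otimes_{\Delta S} {\rm C}(B_{\rm sym} R)$ is a quasi-isomorphism for a semi-free resolution $R$, that this is equivalent to the vanishing of $\Tor^{\Delta S}_{>0}(k, B_{\rm sym} R)$, and that Ault's $E_\infty$-model $C(R)_\ast$ provides the right mechanism to upgrade the resulting bijection to an algebra isomorphism. Your sketch of the two ways to prove the free-algebra case (rational formality of the Barratt--Eccles operad on one hand, topological arguments on the other) also captures the flavour of both proofs of Proposition~\ref{pC} in the paper.

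However, there is a genuine gap in your proposed passage from the free case to a general semi-free $R$ via skeletal filtration. You assert that ``the functorial properties of $B_{\rm sym}$ together with standard properties of $\Tor^{\Delta S}$ give the analogous behavior'' under pushouts of simplicial algebras, but no such standard property exists: $B_{\rm sym}$ does not take pushouts (or coproducts) of algebras to anything resembling a Mayer--Vietoris situation in $\Delta S$-modules, and $\Tor^{\Delta S}(k, B_{\rm sym}(-))$ is exactly the kind of functor whose failure to be a derived left adjoint is the crux of the problem. In fact, justifying such Mayer--Vietoris behaviour would already require essentially the conclusion of the theorem, so the argument risks circularity. The paper's actual reduction is far more elementary and sidesteps this entirely: for a semi-free simplicial algebra $R$, \emph{each simplicial level} $R_n$ is literally a free tensor algebra $T(V_n)$ on some discrete vector space $V_n$, hence $(B_{\rm sym}R)_n = B_{\rm sym}T(V_n)$. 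Proposition~\ref{pC} (which is stated for a plain $k$-vector space, not a simplicial one) then applies degreewise, so in the bicomplex computing $k \otimes^{\L}_{\Delta S}{\rm C}(B_{\rm sym}R)$ the $E^1$-page is concentrated in the $\Tor_0$ row, forcing the comparison map to \eqref{srr} to be a quasi-isomorphism with no filtration on $R$ needed. Your proposal conflates ``free simplicial tensor algebra'' with the levelwise tensor algebras of a semi-free object, and then invents a reduction step where the paper simply observes there is nothing to reduce.
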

The proof of Theorem~\ref{T1a} is based on the following key proposition.

\begin{proposition} \la{pC}
Let $V$ be a $k$-vector space, and let $TV = T_k(V)$ denote the tensor algebra of $V$. Then
$$
{\rm Tor}^{\Delta S}_i(k, B_{\rm sym}TV)\, \cong\, \begin{cases}
                                                    \Sym_k(V) & i =0 \\
                                                     0 & i\neq 0
                                                  \end{cases}
                                                  $$
\end{proposition}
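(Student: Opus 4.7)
The plan is to exploit the operadic model for symmetric homology developed by Ault in \cite{Au1, Au2}. Recall from the proof of Lemma~\ref{algstr} that there is a functor $C(\mbox{--})_\bullet: \Alg_k \to \mathrm{sE}_\infty\Alg_k$ realizing $\HS_\ast(A) \cong \pi_\ast[CA_\bullet]$. Thinking of $CA_\bullet$ as a two-sided simplicial bar construction $B(D, T, A)$, where $T$ is the tensor-algebra monad on simplicial $k$-modules and $D$ is the monad of an $E_\infty$-operad (e.g.\ the Barratt--Eccles operad of \cite{BE74}), computing $\HS_\ast(TV)$ reduces to computing $\pi_\ast\, B(D, T, TV)$.

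For $A = TV$, which is the free $T$-algebra on the $k$-vector space $V$, the general theory of two-sided bar constructions for monads yields a canonical homotopy equivalence $B(D, T, TV) \simeq DV$ of simplicial $E_\infty$-algebras. Indeed, the augmentation $B(T, T, V) \xrightarrow{\sim} TV$ is a simplicial homotopy equivalence of $T$-algebras (a free bar construction is split), and applying $B(D, T, \mbox{--})$ contracts the middle $T$ onto the free $D$-algebra $DV$ generated by $V$. Hence $\HS_\ast(TV) \cong \pi_\ast(DV)$.

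To finish, I would invoke the characteristic-zero input: every $E_\infty$-operad is formal over a field $k \supseteq \Q$ and connected by a zigzag of operad quasi-isomorphisms to the commutative operad $\mathrm{Com}$. Transporting free algebras along this zigzag identifies $DV$, up to quasi-isomorphism, with the free commutative $k$-algebra $\Sym_k V$ viewed as a constant simplicial object. Since $\Sym_k V$ has homotopy concentrated in degree $0$, I conclude $\HS_0(TV) \cong \Sym_k V$ and $\HS_i(TV) = 0$ for $i>0$, matching the claim.

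The main technical obstacle lies in making the first two steps precise within Ault's combinatorial model. One must verify that $CA_\bullet$ genuinely presents a two-sided bar construction over the tensor-algebra monad, and that the contraction $B(D,T,TV) \simeq DV$ survives this identification; this is operadic bookkeeping in the spirit of May's calculus of monads, adapted to the $\Delta S$-framework. Once installed, the formality argument and the final homotopy calculation are standard. The second topological proof alluded to in the introduction can proceed, alternatively, by unwinding the identity $B_{\rm sym}(TV) = \Psi_{\rm sym}^\ast \underline{TV}$ for the cocommutative Hopf algebra $TV$ and applying Lemma~\ref{torpsh} to reduce the Tor-computation over $\Delta S$ to one over $\ffgr$, where the freeness of the tensor algebra allows an explicit identification of the resulting derived left Kan extension with $\Sym_k V$ placed in degree $0$.
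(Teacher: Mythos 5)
Your strategy is conceptually close to the paper's second proof, but the step you label ``operadic bookkeeping'' is precisely where the entire difficulty sits, and as written your argument has a circularity problem.

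The identification $\HS_\ast(A)\cong\H_\ast[B(D,T,A)]$ for a general algebra $A$ is exactly Conjecture~\ref{Con2} (Theorem~8 of \cite{AF07}), which, as the paper notes, was never proven and was even suspected to be false by Ault. In the present paper it is established only as Corollary~\ref{bdta}, whose proof relies on Theorem~\ref{T1a}, which in turn relies on Proposition~\ref{pC} --- the very statement you are trying to prove. So if you take ``$CA_\bullet$ is a two-sided bar construction $B(D,T,A)$'' as a starting point, you are assuming a consequence of the result you want. The escape is to note that you only need the bar-construction presentation for the special case $A=TV$: that is, you need $\HS_\ast(TV)\cong\pi_\ast(DV)$, the free $E_\infty$-algebra on $V$. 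This \emph{is} true and \emph{is} provable independently of \ref{pC} --- but it is the combined content of Lemma~33 and Lemma~36 of \cite{Au1} (Ault identifies $\HS_\ast(k[J(X_+)])$ with $\H_\ast[\mathcal{N}X]$, and then shows $\mathcal{N}X\simeq\mathcal{D}X$), and those lemmas involve genuinely nontrivial combinatorics on $\Delta S_+$ rather than routine monad calculus. The paper's second proof of \ref{pC} runs exactly this way: James construction, Lemma~33, Lemma~36, then the Maschke-type collapse $\bigoplus_n\H_\ast(\Sigma_n,V^{\otimes n})\cong\Sym_k(V)$ in characteristic zero. Your characteristic-zero formality argument at the end is correct and equivalent to that last step, and the collapse $B(D,T,TV)\simeq DV$ for free $T$-algebras is standard May-style monad theory; so once Ault's Lemmas 33 and 36 are in hand your argument essentially reproduces the paper's second proof, just phrased more operadically.

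The alternate route you sketch in the last sentence --- applying Lemma~\ref{torpsh} to $B_{\rm sym}TV=\Psi_{\rm sym}^\ast\underline{TV}$ --- does not go through as stated: Lemma~\ref{torpsh} trades restriction against a \emph{left Kan extension} of the module in the other variable, and $B_{\rm sym}TV$ is already a restriction, so you would be left computing $\L(\Psi_{\rm sym})_!\Psi_{\rm sym}^\ast\underline{TV}$, which is not $\underline{TV}$ in general; you would need to understand the counit of $(\L(\Psi_{\rm sym})_!,\Psi_{\rm sym}^\ast)$, which is where the paper's first proof instead brings in Dold--Kan, the Kan loop group, and representation homology of wedges of spheres. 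That topological detour is not optional if you take the $\Psi_{\rm sym}$ route.
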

We will give two different proofs of Proposition \ref{pC}, both relying on topological results. Our first proof is parallel to that of \cite[Proposition 2.1]{BRYII}: 
it is based on \cite[Theorem 3.2]{BRYII}, its Corollary \cite[Corollary 3.2]{BRYII} as well as 
\cite[Proposition 5.3]{BRYI} and \cite[Corollary 5.1]{BR22}, the last two  involving topological arguments.
\begin{proof}[First proof of Proposition \ref{pC}]
By \cite[Theorem 86]{Au1}, we know that 
$\,\HS_0(A) \cong A_{\rm ab}\,$ for any $A \in \Alg_k$. Hence  $\,k \otimes_{\Delta S} B_{\rm sym}TV \cong \Sym_k(V)\,$. We need only to prove the vanishing of higher homology:
$${\rm Tor}^{\Delta S}_i(k, B_{\rm sym}TV) = 0 \quad \text{for all}\quad i>0\,
$$ 
To this end we assign $V$ homological degree $2$, and let $\cL V$ denote the free graded Lie algebra generated by $V$. Then $TV \cong U(\cL V)$, where $U(\mbox{--})$ stands for universal enveloping algebra. This gives $TV$ the structure of a (graded) cocommutative Hopf algebra. Let $\underline{TV}$ denote the corresponding graded right $\ffgr$-module, whose component in homological degree $2q$ is $\underline{TV}_q$, the component of $\underline{TV}$ of weight $q$. More explicitly, $\underline{TV}_q(\langle m \rangle)$ is the span (over $k$) of elements of the form $w_{d_1}\otimes \ldots \otimes w_{d_m} \in (TV)^{\otimes m}$ such that $d_1 + \ldots +d_m =q$ and $w_{d_i} \in V^{\otimes d_i} \subset TV$. By \eqref{pullback}, $B_{\rm sym}TV$ acquires the structure of a graded $\Delta S$ module whose component in homological degree $2q$ is 
$$ (B_{\rm sym}TV)_q\,:=\, \Psi_{\rm sym}^{\ast}\underline{TV}_q\ .$$ 
Thus, 
$$ \H_n[k \otimes^{\L}_{\Delta S} B_{\rm sym}TV] \,\cong\, \Moplus_{2q+i=n} {\rm Tor}^{\Delta S}_i(k, (B_{\rm sym}TV)_q)\ .$$
The desired proposition will follow once we show that 
\begin{equation} \la{symhtv} \H_\ast[k \otimes^{\L}_{\Delta S} B_{\rm sym}TV] \,\cong\, \Lambda_k(V)\ . \end{equation}
By \cite[Theorem A.1]{BRYII}, there are Quillen equivalences refining the Dold-Kan equivalence $N:\Com_k \rightleftarrows \sVect_k:N^{-1}$
$$N^{\ast}\,:\,\DGL_k^{+} \rightleftarrows \sLie_k\,:\,N\,,\qquad  N^{\ast}\,:\,\DGA_k^{+} \rightleftarrows \sAlg_k\,:\,N  $$
where $\mathtt{s}\cC$ denotes the category of simplicial objects in a category $\cC$. Equip $N^{\ast}(TV) \cong T(N^{-1}V)$ (see formula A.4 in the Appendix of {\it loc. cit.}) with simplicial cocommutative Hopf algebra structure given by its identification with the universal enveloping algebra $U(\cL(N^{-1} V))$ of the free Lie algebra generated by $N^{-1}V$. We denote the corresponding simplicial right $\ffgr$-module by $\underline{N^{\ast}TV}$. Let $B_{\rm sym}N^{\ast}(TV) = \Psi_{\rm sym}^{\ast}(\underline{N^{\ast}TV})$. Since $V$ has homological degree $2$, $N^{\ast}\cL V \cong \cL(N^{-1}V)$ (see formula A.4 in the appendix of \cite{BRYII}) is a semi-free simplicial model of the space $X$ given by the wedge of ${\rm dim}_k V$ copies of the three sphere $\mathbb{S}^3$. By \cite[Theorem 3.2]{BRYII} the right $\ffgr$-modules $k[\lgr({X})]$ and $\underline{N^{\ast}TV}$ are (naturally) weakly equivalent, where $\lgr({X})$ is the Kan loop group of $X$. Hence $B_{\rm sym}N^{\ast}TV$ and $B_{\rm sym}k[\lgr(X)]$ are (naturally) weakly equivalent as simplicial $\Delta S$-modules. As in {\it loc. cit.}, Corollary 3.2, it follows that there is a natural isomorphism on homologies
$$\H_\ast[k \otimes^{\L}_{\Delta S} N(B_{\rm sym}N^{\ast}TV)] \,\cong\, \H_\ast[k \otimes^{\L}_{\Delta S} N(k[\lgr(X)])]\ . $$

The right hand side is isomorphic to the symmetric homology ${\rm HS}_\ast(X)$ of a(ny) simplicial group model of $X$ by definition (see \cite[Section 4.2]{BR22}). Hence there are isomorphisms
\begin{equation} \la{hsspace_x} \H_\ast[k \otimes^{\L}_{\Delta S} N(B_{\rm sym}N^{\ast}TV)] \,\cong\, {\rm HS}_\ast(X) \,\cong\, \HR_\ast(X, \mathbb{G}_m(k)) \,\cong\, \Lambda_k(V)\ . \end{equation}
The second isomorphism above is by \cite[Corollary 5.1]{BR22} (since $X$ is indeed simply connected), while the last isomorphism above follows from our earlier computation of the representation homology of wedges of spheres \cite[Proposition 5.3]{BRYI}. Finally, as observed in (the proof of) \cite[Proposition 2.1]{BRYII}, there is a quasi-isomorphism of right $\ffgr$-modules $\underline{\varepsilon}:\underline{TV} \to N(\underline{N^{\ast}TV})$ induced by the unit of the adjunction between the functors $N$ and $N^{\ast}$. It follows that $\Psi^{\ast}_{\sym}(\underline{\varepsilon}): B_{\rm sym} TV \to N(B_{\rm sym}N^{\ast} TV)$ is a quasi-isomorphism of $\Delta S$-modules. By \eqref{hsspace_x}, $\H_\ast[k \otimes^{\L}_{\Delta S} B_{\rm sym}TV] \,\cong\, \bLambda_k V$. This verifies \eqref{symhtv}, completing the proof of the desired proposition.
\end{proof}
Our second proof of Proposition \ref{pC} is shorter than the first, but it relies heavily on results of Fiedorowicz and Ault (in particular, on rather technical computations of Lemma~33 and Lemma~36 of \cite{Au1}).
\begin{proof}[Second proof of Proposition \ref{pC}]
Choose a basis in $V$, say a set $X \subset V $, so that $TV \cong T(k[X]) \cong k[J(X_+)]$, where $J(X_{+})$ denotes the free monoid generated by the set $ X_{+} = X \amalg \{\ast\}$, with `$ \ast $' identified with the identity element. We regard $J(X_{+})$
as the classical James construction of the discrete space $X$. Then, by \cite[Lemma 33]{Au1}, we have 
$$
{\rm HS}_\ast(TV)\, \cong\, {\rm HS}_\ast(k[J(X_+)])\, \cong \, \H_\ast[\mathcal{N}X; k]\ ,
$$ 
where 
$$\mathcal{N}X \,:=\, \coprod_{n \geq -1} N([n] \downarrow \Delta S_+) \times_{\Sigma_{n+1}^{\rm op}} X^{n+1}\ . $$
Here, $\Delta S_+$ is the category obtained by adding to $\Delta S$ the initial object $[-1]$, and $N(\mbox{--})$ in the formula above stands for the nerve. By (the proof of) \cite[Lemma 36]{Au1}, there is a weak equivalence $\mathcal{N}X \simeq \mathcal{D}X$, where $\mathcal{D}X$ is the free $\mathcal{D}$-algebra generated by $X$ for a specific simplicial $E_{\infty}$-operad $\mathcal{D}$. Hence 
$$ {\rm HS}_\ast(TV) \,\cong\, \H_\ast[\mathcal{N}X; k] \,\cong\,\H_\ast[\mathcal{D}X; k]  \ .$$
Now, recall that $\mathcal{D}X= \coprod_{n \geqslant 0} E_{\ast}\Sigma_{n}^{\rm op} \times_{\Sigma_n} X^n$. Hence, 
$$\H_\ast[\mathcal{D}X; k] \,\cong\, \Moplus_{n=0}^{\infty} k[E_{\ast}\Sigma_{n}^{\rm {op}}] \otimes_{k[\Sigma_n]} k[X]^{\otimes n} \,\cong\, \Moplus_{n=0}^{\infty} k[E_{\ast}\Sigma_{n}^{\rm {op}}] \otimes_{k[\Sigma_n]} V^{\otimes n}\ . $$
The last isomorphism because $X$ is a discrete basis of $V$.  Since $E_{\ast}\Sigma_n^{\rm op}$ is contractible for all $n$, $k[E_\ast \Sigma_n^{\rm op}]$ is weakly equivalent to $k$. Further, $k[E_\ast \Sigma_n^{\rm op}]$ is a levelwise projective simplicial right $\Sigma_n$-module. Hence,
$$  {\rm HS}_\ast(TV) \,\cong\, \H_\ast[\mathcal{D}X; k] \,\cong\, \Moplus_{n=0}^{\infty} \H_\ast[k \otimes^{\L}_{k[\Sigma_n]} V^{\otimes n}] \,\cong\, \Moplus_{n=0}^{\infty} \H_\ast(\Sigma_n, V^{\otimes n})\,\cong\,\Sym_k(V)\ .$$
where the last isomorphism holds because  $k$ is a field of characteristic $0$. 
\end{proof}
Now, we can proceed with

\begin{proof}[Proof of Theorem \ref{T1a}]
Let $R \stackrel{\sim}{\to} A$ be a semi-free simplicial resolution. Then, the (left) $\Delta S$-module of $n$-simplices in $B_{\rm sym}{R}$ is of the form $B_{\rm sym}TV$ for some vector space $V$. It follows from Proposition \ref{pC} that the following 
natural chain map is a quasi-isomorphism:
\begin{equation} \la{srr} k \otimes^{\L}_{\Delta S} {\rm C}(B_{\rm sym}R) \to k \otimes_{\Delta S} {\rm C}(B_{\rm sym}R) \cong {\rm C}(R_{\rm ab})\,, \end{equation}
where last isomorphism is a consequence of \cite[Theorem 86]{Au1}. Since \eqref{srr} is precisely the map that induces $\SR_\ast(A)$ on homology, the $\SR_\ast(A)$ is an isomorphism of graded $k$-vector spaces.
By Proposition \ref{factorTr}, we then conclude that this isomorphism of vector spaces
is actually an isomorphism of $k$-algebras.
\end{proof}

As a first application of Theorem~\ref{T1a}, we state the following result that strengthens \cite[Corollary 5.1]{BR22} (see Remark~\ref{Rem2} in the Introduction). To avoid confusion
we recall that, unlike representation homology of group algebras, the representation homology of groups is defined in terms of Tor-functors (see Definition~\ref{RHDef}). 
However, when $\cH = \cO[\GL_n(k)] $ and $k$ is a field of characteristic $0$, Theorem~\ref{grouptoalg} proved in Section~\ref{S2.2} shows that the two definitions agree. In combination with Theorem~\ref{T1a}, this gives
\begin{corollary}
\la{GrHS}
For any simplicial group $ \Gamma \in \sGr $, there is a natural algebra isomorphism
$$
\HS_*(k[\Gamma])\,\cong\, \HR_*(\Gamma, \bG_m(k))\,,
$$
where $ \bG_m(k) = \GL_1(k) $ is the multiplicative group of $k$.
\end{corollary}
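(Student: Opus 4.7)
The plan is to obtain the isomorphism by composing two identifications that have already been established in the paper. By Theorem~\ref{T1a}, applied to the simplicial $k$-algebra $A = k[\Gamma]$, we have a natural isomorphism of graded commutative $k$-algebras
$$
\SR(k[\Gamma])_\ast:\ \HS_\ast(k[\Gamma]) \xrightarrow{\ \sim\ } \HR_\ast(k[\Gamma],\,k)\,.
$$
On the other hand, Theorem~\ref{grouptoalg}, specialized to $n=1$ (so that $\GL_1(k) = \bG_m(k)$), gives a natural isomorphism
$$
\HR_\ast(\Gamma,\,\bG_m(k)) \,\cong\, \HR_\ast(k[\Gamma],\,k)\,.
$$
Composing these two isomorphisms yields the desired identification $\HS_\ast(k[\Gamma]) \cong \HR_\ast(\Gamma, \bG_m(k))$.

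The only remaining point is to confirm that both sides of the final isomorphism carry compatible graded commutative algebra structures, and that the composition above is a map of algebras. The algebra structure on $\HS_\ast(k[\Gamma])$ is the Ault structure, and $\SR(k[\Gamma])_\ast$ is an algebra map by Proposition~\ref{factorTr} together with Theorem~\ref{T1a}. For the right-hand side, the algebra structure on $\HR_\ast(k[\Gamma],k)$ is the standard one on derived abelianization (commutative, since in degree zero it is $k[\Gamma]_{\mathrm{ab}}$), and the isomorphism of Theorem~\ref{grouptoalg} — which was obtained through Proposition~\ref{hrmonalg} and Proposition~\ref{grpmonrh} via a chain of Tor-identifications — transports this structure to $\HR_\ast(\Gamma, \bG_m(k))$.

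I do not anticipate any serious obstacle here: the work has been done in the preceding two theorems, and the result is essentially a formal consequence of their conjunction. The only subtle step is to verify that the identification $\HR_\ast(\Gamma, \GL_n(k)) \cong \HR_\ast(k[\Gamma], k^n)$ at $n=1$ respects the (graded commutative) algebra structure; this follows because both sides are naturally computed by the same derived functor tensor product $\underline{k[\Gamma]} \otimes^{\L}_{\ffgr} \underline{\cO(\bG_m)}$ (after applying Lemma~\ref{torpsh} and Lemma~\ref{lpshrk}), and the multiplication in each case is induced by the Hopf algebra structure on $\cO(\bG_m)$, which is preserved under $\L p_!$.
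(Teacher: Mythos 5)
Your proposal is correct and matches the paper's argument exactly: the corollary is obtained by combining Theorem~\ref{T1a} (giving $\HS_\ast(k[\Gamma]) \cong \HR_\ast(k[\Gamma],k)$) with Theorem~\ref{grouptoalg} at $n=1$ (giving $\HR_\ast(\Gamma,\bG_m(k)) \cong \HR_\ast(k[\Gamma],k)$). The paper states this composition tersely in the paragraph preceding the corollary; your extra remarks on compatibility of the algebra structures are a reasonable supplement but do not change the route.
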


Next, we apply Theorem~\ref{T1a} to verify Conjecture \ref{Con2} stated in the Introduction. For a detailed explanation of the notation used in this conjecture, we refer to \cite[Theorem~8]{AF07} and \cite[Remark 2.8]{Au2}. Here, we only recall that $D$ denotes the monad associated to an $E_\infty$-operad in the category of simplicial $k$-modules that, for $ V \in \sMod_k $, can be explicitly defined by
\begin{equation}
\la{DV}
DV = \Moplus_{n=0}^{\infty} k[E_{\ast}\Sigma_{n}^{\rm {op}}] \otimes_{k[\Sigma_n]} V^{\otimes n}
\end{equation} 
Given a $k$-algebra $A$, $\, B(D,T,A)\,$ stands then for the two-sided topological bar construction, i.e. the geometric realization of a simplicial bar constriction
$ B_\ast(D,T,A)$, associated to the monad $D$ and the tensor algebra functor $T: \Mod_k \to \Mod_k $.

\begin{corollary} \la{bdta} 
For $A \in \Alg_k$, there is an isomorphism of graded commutative $k$-algebras 
$$
{\rm HS}_\ast(A) \cong \H_\ast[B(D,T,A)]\ .
$$ 
\end{corollary}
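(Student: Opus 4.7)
The plan is to combine Theorem~\ref{T1a} with a direct identification of $B(D,T,A)$ with a simplicial model already used in the paper. By Theorem~\ref{T1a} we have a natural isomorphism $\HS_\ast(A) \cong \HR_\ast(A,k)$ of graded commutative $k$-algebras, so it suffices to produce a natural algebra isomorphism $\H_\ast[B(D,T,A)] \cong \HR_\ast(A,k)$. My strategy is to show that $B(D,T,A)$, up to weak equivalence of simplicial $E_\infty$-algebras, is the diagonal of Ault's bisimplicial construction $C(A)$ used in the proofs of Lemma~\ref{algstr} and Proposition~\ref{factorTr}, and then reuse the chain of weak equivalences established there.

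First, I would match $B(D,T,A)$ with Ault's $C(A)$. Using the explicit presentation
$$
(CA)_i \,\cong\, \bigoplus_{[n] \stackrel{\mathrm{id}}{\to} [n] \stackrel{f_1}{\to} [n_1] \stackrel{f_2}{\to} \cdots \stackrel{f_i}{\to} [n_i]} A^{\otimes i}
$$
recalled in the proof of Proposition~\ref{factorTr}, together with the weak equivalence $\mathcal{N}X \simeq \mathcal{D}X$ used in the second proof of Proposition~\ref{pC} (coming from \cite[Lemma~36]{Au1}), one verifies that $C(A)$ is, up to a natural zig-zag of $E_\infty$-algebra weak equivalences, the simplicial two-sided bar construction $B_\bullet(D,T,A)$ associated to the Barratt--Eccles $E_\infty$-monad $D$ of \eqref{DV} and the tensor algebra monad $T$. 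Passing to geometric realizations yields $\H_\ast[B(D,T,A)] \cong \pi_\ast[\mathtt{diag}(CA)]$ as graded commutative algebras, the algebra structure on the right being the one of Lemma~\ref{algstr}.

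Second, I would invoke the chain of weak equivalences of simplicial $E_\infty$-algebras already produced inside the proof of Proposition~\ref{factorTr}: choose a semi-free simplicial resolution $R \xrightarrow{\sim} A$ in $\sAlg_k$; K\"{u}nneth gives $\mathtt{diag}(CR) \xrightarrow{\sim} \mathtt{diag}(CA)$ since each $(CR)_i \to (CA)_i$ is degreewise a Künneth weak equivalence, while the natural $E_\infty$-algebra homomorphism $C(R_i) \to (R_i)_{\mathrm{ab}}$ is a weak equivalence for every $i$ because each $R_i$ is a tensor algebra $TV_i$ and Proposition~\ref{pC} forces $\pi_{>0}[C(R_i)] = 0$ with $\pi_0[C(R_i)] = \Sym_k(V_i) = (R_i)_{\mathrm{ab}}$. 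Composing and taking $\pi_\ast$ gives $\pi_\ast[\mathtt{diag}(CA)] \cong \pi_\ast(R_{\mathrm{ab}}) =: \HR_\ast(A,k)$, and this is an isomorphism of graded commutative algebras by \cite[Prop.~4.4]{Au2}. Combining the two steps yields the required algebra isomorphism $\H_\ast[B(D,T,A)] \cong \HS_\ast(A)$.

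The main obstacle I anticipate is the bookkeeping in the first step: formally identifying Ault's presentation of $C(A)$ via nerves of comma categories in $\Delta S_+$ with the standard two-sided bar construction $B(D,T,A)$ for Fiedorowicz's Barratt--Eccles monad $D$ and the tensor algebra monad $T$, in a way compatible with the $E_\infty$-algebra structures on both sides. This is essentially implicit in Ault's work (\cite[Lemmas~33,~36]{Au1} and the discussion preceding \cite[Prop.~4.4]{Au2}), but care is needed to confirm that simplicial faces/degeneracies, the $D$-action, and the $T$-resolution line up with the conventions defining $B(D,T,A)$.
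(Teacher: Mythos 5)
The central step of your proposal---that Ault's $C(A)$ is, up to a natural zig-zag of weak equivalences of simplicial $E_\infty$-algebras, the two-sided bar construction $B_\bullet(D,T,A)$---is not a bookkeeping matter; it is essentially the content of Conjecture~\ref{Con2} itself. If such a zig-zag existed (in particular, if it existed for purely combinatorial reasons, independent of $\operatorname{char} k$), then since Ault shows $\pi_\ast[\mathtt{diag}(CA)] \cong \HS_\ast(A)$ in all characteristics, the conjecture would follow in all characteristics; but, as noted in the footnote to Conjecture~\ref{Con2}, Ault himself suspects the conjecture is false over $\mathbb{Z}$ or $\mathbb{F}_2$ (cf.\ Remark~\ref{Rem1}). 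The only input you have for producing such a comparison is the weak equivalence $\mathcal{N}X \simeq \mathcal{D}X$ of \cite[Lemma~36]{Au1}, which is a statement about \emph{free monoids on discrete sets}; it does not obviously propagate to a simplicial comparison $C(A) \simeq B_\bullet(D,T,A)$ for a general algebra $A$ (the simplicial levels $(CA)_i$, built from chains in comma categories over $\Delta S_+$, are not of the form $DT^i A$). Notice also the internal redundancy of your plan: if Step~1 held, then $\H_\ast[B(D,T,A)] \cong \pi_\ast[\mathtt{diag}(CA)] \cong \HS_\ast(A)$ outright, and Step~2 together with Theorem~\ref{T1a} would add nothing.

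The paper's proof deliberately avoids comparing $C(A)$ and $B_\bullet(D,T,A)$ directly. Instead it observes that, in characteristic $0$, $DV \to \operatorname{Sym}_k(V)$ is a natural homology isomorphism for every $k$-vector space $V$ (this is where the characteristic hypothesis enters, via the second proof of Proposition~\ref{pC}); this induces a homology isomorphism $B(D,T,A) \to B(\operatorname{Sym},T,A)$, and then one identifies $B_\bullet(\operatorname{Sym},T,A)$ with $B_\bullet(T,T,A)_{\rm ab}$ and uses the classical fact that $B_\bullet(T,T,A)$ is a semi-free simplicial resolution of $A$, giving $\H_\ast[B(D,T,A)] \cong \HR_\ast(A,k)$. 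Theorem~\ref{T1a} then converts this to $\HS_\ast(A)$. This route compares both sides of the conjecture to representation homology rather than to each other, which is precisely what lets the characteristic~$0$ hypothesis enter cleanly and explains why the statement can fail otherwise. You should redo Step~1 along these lines rather than attempting a direct identification of $C(A)$ with $B_\bullet(D,T,A)$.
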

\begin{proof}
When $k$ is a field of characteristic $0$, we have shown (see the second proof of Proposition \ref{pC}) that there is a natural homology isomorphism $\,DV \to \Sym_k(V)\,$ for any $k$-vector space $V$. This induces a homology isomorphism of the bar constructions: $\,B(D,T,A) \to B(\Sym, T, A)\,$, where $\Sym$ is the monad associated with the commutative operad. Hence
$$
\H_\ast[B(D,T,A)] \,\cong\,  \H_\ast[B(\Sym,T,A)]
                \,    \cong \, \pi_\ast[B_{\ast}(\Sym, T, A)]
                  \,  \cong\,  \pi_\ast[B_\ast(T,T,A)_{\rm ab}]
                    \,\cong \, \HR_\ast(A,k)\,,
$$
where the last isomorphism follows from the well-known fact that $B_\ast(T,T,A)$ is a semi-free simplicial resolution of $A$. Theorem \ref{T1a} now implies the desired result.
\end{proof}

For further applications, it will be convenient for us to restate the result of Theorem~\ref{T1a} for differential graded algebras. To this end we will use monoidal versions
of the classical Dold-Kan correspondence in the form proved in \cite[Theorem A.1]{BRYII}. Specifically, if $k$ is a field of characterisitc zero, there are natural Quillen equivalences: 
\begin{equation} \la{refdk}N^{\ast}\,:\,\DGA_k^{+}\rightleftarrows \sAlg_k\,:\,N\,,\qquad N^{\ast} \,:\,\cDGA_k^{+} \rightleftarrows \scAlg_k\,:\,N\,, \end{equation}
where $ \DGA_k^+$ (respectively, $\cDGA_k^{+}$) denote the model categories of non-negatively graded (chain) DG $k$-algebra (respectively, commutative DG algebras) over $k$. In the above adjunctions, the right adjoint functors are given by the Dold-Kan 
normalization functor $N$ restricted to the corresponding subcategories of algebras in $\sMod_k$. Since $N$ reflects weak equivalences and the unit ${\rm Id} \to N N^{\ast}$ is a natural weak equivalence, the left adjoint functors $N^{\ast}$ preserve weak equivalences. 

Now, recall that, in \cite[Section 2]{BKR}, we defined the representation homology $\HR_\ast(A,k^n)$ for any $ A \in \DGA_k^+$. Apart from the model categories $\sAlg_k$ and $\scAlg_k$ being replaced by $\DGA_k^{+}$ and $\cDGA_k^+$ respectively, the definition of \cite{BKR} is identical to the definition of representation homology of simplicial algebras that we presented in the Introduction. To deduce the following proposition from Theorem \ref{T1a} we need only to check that the equivalence $ N^{\ast}: \DGA_k^{+} \to \sAlg_k$ preserves both symmetric and representation homology.
\begin{proposition}\la{tA}
For any $A \in \DGA_k^{+}$, there is an isomorphism of graded $k$-vector spaces $\HS_\ast(A) \cong \HS_\ast(N^{\ast}A)$. As a consequence, there is an isomorphism of graded commutative $k$-algebras
$${\rm HS}_\ast(A) \cong \HR_\ast(A,k)\,,$$
where $\HS_\ast(A)$ inherits its algebra structure from $\HS_\ast(N^{\ast}A)$.
\end{proposition}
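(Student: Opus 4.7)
The plan is to reduce the statement to Theorem~\ref{T1a} by showing that the Quillen equivalence $N^\ast : \DGA_k^+ \rightleftarrows \sAlg_k : N$ preserves both sides, i.e.\ symmetric homology \emph{and} one-dimensional representation homology. The algebra structure on $\HS_\ast(A)$ for $A \in \DGA_k^+$ is then defined by transport from $\HS_\ast(N^\ast A)$, after which the second assertion becomes a formal consequence of Theorem~\ref{T1a} applied to the simplicial algebra $N^\ast A$.

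The first step, and the main technical one, is to produce a natural quasi-isomorphism of $\Delta S$-modules in $\Ch(\Mod_k)$
\[
\bar{\varepsilon}_A:\ B_{\rm sym}(A)\ \xrightarrow{\ \sim\ }\ N\bigl(B_{\rm sym}(N^\ast A)\bigr)\ .
\]
On objects $[n]\in \Delta S$, the left-hand side is $A^{\otimes(n+1)}$ (tensor product of chain complexes), while the right-hand side is $N\bigl((N^\ast A)^{\otimes(n+1)}\bigr)$ (tensor product of simplicial modules, then normalised). To construct $\bar\varepsilon_A$, I would compose the unit $A \to NN^\ast A$ of the Quillen equivalence \eqref{refdk} (tensored $(n+1)$ times) with the iterated Eilenberg--Zilber shuffle map
\[
(NN^\ast A)^{\otimes(n+1)}\ \longrightarrow\ N\bigl((N^\ast A)^{\otimes(n+1)}\bigr)\,.
\]
Both the unit and the shuffle map are quasi-isomorphisms, so $\bar\varepsilon_A([n])$ is a quasi-isomorphism in every $\Delta S$-degree. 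The content of the step is to verify that $\bar\varepsilon_A$ respects the $\Delta S$-module structure, i.e.\ is compatible with the substitution formula \eqref{symbarmor} defining $B_{\rm sym}$ on morphisms. This is a direct but slightly fiddly check: the face and degeneracy operators in $\Delta$ are multiplication/insertion of units in tensor factors, and these are compatible with the shuffle map by the standard lax (symmetric) monoidal property of $N$; the symmetric group action in $\Delta S$ is by permutation of tensor factors, and the shuffle map is symmetric. Granting this, the resulting $\Delta S$-equivariant quasi-isomorphism $\bar\varepsilon_A$ yields
\[
\HS_\ast(A)\ =\ \Tor^{\Delta S}_\ast(k, B_{\rm sym}A)\ \cong\ \Tor^{\Delta S}_\ast\bigl(k, N B_{\rm sym}(N^\ast A)\bigr)\ =\ \HS_\ast(N^\ast A)\,,
\]
which is the first assertion of the proposition.

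The second step is the analogous comparison for representation homology: $\HR_\ast(A,k) \cong \HR_\ast(N^\ast A, k)$. Here I would observe that the abelianisation adjunctions on DG and simplicial algebras are intertwined by the Dold--Kan equivalences \eqref{refdk}: the square
\[
\begin{diagram}[small]
\DGA_k^+ & \pile{\rTo^{N^\ast}\\ \lTo_{N}} & \sAlg_k\\
\dTo<{(-)_1}\uTo & & \dTo<{(-)_1}\uTo\\
\cDGA_k^+ & \pile{\rTo^{N^\ast}\\ \lTo_{N}} & \scAlg_k
\end{diagram}
\]
commutes on the nose (the matrix/inclusion right adjoints commute with the functor $N$ applied objectwise, since $N$ is only acting on the underlying simplicial $k$-module). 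Since $N^\ast$ is a left Quillen equivalence in both rows, it preserves cofibrant resolutions, and we obtain a natural isomorphism $\L(N^\ast A)_1 \cong N^\ast \L(A)_1$ in $\Ho(\scAlg_k)$. Applying $\pi_\ast$ on the left equals applying $\H_\ast \circ N$, and $N N^\ast \simeq \mathrm{id}$ is a natural weak equivalence, so we conclude $\HR_\ast(N^\ast A, k) \cong \HR_\ast(A, k)$ as graded commutative $k$-algebras.

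Combining the two steps with Theorem~\ref{T1a} applied to $N^\ast A \in \sAlg_k$ yields the chain of isomorphisms
\[
\HS_\ast(A)\ \cong\ \HS_\ast(N^\ast A)\ \xrightarrow{\SR_\ast(N^\ast A)}\ \HR_\ast(N^\ast A,k)\ \cong\ \HR_\ast(A,k)\,,
\]
the middle one being an isomorphism of graded commutative algebras by Theorem~\ref{T1a} and the outer two being natural vector space isomorphisms (with the left one used to define the algebra structure on $\HS_\ast(A)$ and the right one an algebra isomorphism by step~2). The principal obstacle is the $\Delta S$-equivariance of $\bar\varepsilon_A$ in step~1, i.e.\ verifying that the iterated Eilenberg--Zilber shuffles are compatible with the substitution morphisms of $\Delta S$; everything else is formal Quillen-equivalence bookkeeping together with an application of Theorem~\ref{T1a}.
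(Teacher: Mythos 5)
Your proposal is correct and follows essentially the same two-step route as the paper's own proof: first show $\HS_\ast(A)\cong\HS_\ast(N^\ast A)$ via the composite $B_{\rm sym}A \to B_{\rm sym}(NN^\ast A)\to N(B_{\rm sym}N^\ast A)$ of the unit and the Eilenberg--Zilber shuffle, then show $\HR_\ast(A,k)\cong\HR_\ast(N^\ast A,k)$ via the commutativity of abelianization with $N^\ast$ and preservation of cofibrant resolutions, and finally invoke Theorem~\ref{T1a}. The only cosmetic differences are that the paper outsources the $\Delta S$-equivariance of the Eilenberg--Zilber map to a citation (Sect.~A.2 of \cite{BRYII}) rather than sketching the lax-symmetric-monoidality argument, and it justifies the square \eqref{abnst} more tersely than your explicit observation about the commuting right adjoints.
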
 
\begin{proof}
There are natural quasi-isomorphisms of complexes of $\Delta S$-modules
$$B_{\rm sym}A \stackrel{\sim}{\to} B_{\rm sym}(NN^{\ast}A) \stackrel{\sim}{\to} N(B_{\rm sym}N^{\ast} A)\,, $$
where the the first arrow --- induced by the natural weak equivalence $A \to NN^{\ast}A$ of DG algebras --- is a weak equivalence by the K\"{u}nneth Theorem, and the second arrow
is a Eilenberg-Zilber map, which is also known to be a quasi-isomorphism by \cite[Sect. A.2]{BRYII}.  Thus, on homology, we get natural isomorphisms of graded $k$-vector spaces
$$\HS_\ast(A):= \H_\ast[k \otimes^{\L}_{\Delta S} B_{\rm sym}A] \,\cong\, \H_\ast[k \otimes^{\L}_{\Delta S} N(B_{\rm sym}N^{\ast} A)] =: \HS_\ast(N^{\ast}A)\ . $$
proving the first statement in the desired proposition. 

Next, since the abelianization functor $(\,\mbox{--}\,)_{\rm ab}$ is left adjoint, it commutes with $N^{\ast}$: i.e., there is a commutative diagram of functors 
\begin{equation} \la{abnst}
\begin{diagram}[small]
\DGA_k^{+} & \rTo^{N^{\ast}} & \sAlg_k\\
 \dTo^{(\mbox{--})_{\rm ab}} & &  \dTo_{(\mbox{--})_{\rm ab}}\\
 \cDGA_k^{+} & \rTo^{N^{\ast}} & \scAlg_k
 \end{diagram}\ .
\end{equation}
Let $R \stackrel{\sim}{\to} A$ be a semi-free resolution of $A$ in $\DGA_k^{+}$. There are isomorphisms (that are natural in $R$)
\begin{equation} \la{nnstrh} \HR_{\ast}(A, k) \,\cong\, \H_\ast(R_{\rm ab}) \,\cong\, \pi_{\ast}[N^{\ast}(R_{\rm ab})] \,\cong\, \pi_\ast[(N^{\ast} R)_{\rm ab}]\,\cong\,\HR_\ast(N^{\ast}A, k)\ . \end{equation}
The first isomorphism above is by definition (see \cite[Sect. 2]{BKR}), the second isomorphism above is by \eqref{abnst}, and the last isomorphism above is by definition (see Definition \ref{RHDefAlg}) and the fact that $N^{\ast}R \to N^{\ast}A$ is a semi-free resolution of $A$ (by \cite[Prop. A.2]{BRYII} and since $N^{\ast}$ preserves weak-equivalences).  This shows that there is an isomorphism of graded commutative $k$-algebras
\begin{equation} \la{hrlnast} \HR_\ast(A,k) \cong \HR_\ast(N^{\ast}A, k)\ . \end{equation}
The second statement of the desired proposition now follows immediately from Theorem \ref{T1a}.
\end{proof}
To illustrate the usefulness of Proposition \ref{tA} we give one example.
\begin{example}
Let $ A =k[x]/(x^2)$ be the algebra of dual numbers defined over a field $k$
of characteristic $0$. It is known (see, e.g., \cite[Sect. 6.3.1]{BFR}) that
$A$ has the semi-free DG algebra resolution $\,R:= k\langle x, t_1,t_2,t_3, \ldots \rangle \,$, where $\deg(t_i) = i $ and the differential is given by $\,dt_1 = x^2 \,$ and
$$ 
dt_i = xt_{i-1}-t_1t_{i-2}+\ldots +(-1)^{i-1}t_{i-1}x \ ,\quad i\ge 2\,.  
$$
By Proposition \ref{tA}, ${\rm HS}_\ast(A)$ is the homology of the commutative DG algebra 
$k[x,t_1,t_2,t_3,\ldots]$ with differential given by the same formulas as above. 
Using \texttt{Macaulay2} computer algebra software, one can find the groups $ {\rm HS}_i(A) $ explicitly to sufficiently high homological degrees ({\it cf.} \cite{BFR}):
\begin{eqnarray*}
{\rm HS}_0(A) & \cong & A\\
{\rm HS}_1(A) & \cong & 0\\
{\rm HS}_2(A) & \cong & A \cdot t_2\\
{\rm HS}_3(A) & \cong & A\cdot (xt_3- 2t_1t_2)\\
{\rm HS}_4(A) & \cong & A \cdot t_2^2 \oplus A \cdot t_4\\
{\rm HS}_5(A) & \cong & A \cdot (-2t_1t_2^2+xt_2t_3) \oplus A \cdot (-t_2t_3-4t_1t_4+2xt_5)\\
{\rm HS}_6(A) & \cong & A \cdot t_2t_4 \oplus A \cdot t_6\\
{\rm HS}_7(A) & \cong & A \cdot (-t_2^2t_3-4t_1t_2t_4+2xt_2t_5) \oplus A \cdot (-t_3t_4-2t_1t_6+xt_7)\\
{\rm HS}_8(A) & \cong & A \cdot t_2t_6 \oplus A \cdot t_4^2 \oplus A \cdot t_8
\end{eqnarray*}
%
\end{example}

\subsection{Symmetric homology of universal enveloping algebras}
\la{S3.3}
As a main application of Proposition \ref{tA}, we now prove the following result that includes (as special cases) Theorem~\ref{TPoly} and Theorem \ref{T2} stated in the Introduction.
\begin{theorem} 
\la{hsua}
Let $\mfa \in \DGL^+_k$ be a chain DG Lie algebras over $k$, and let $ U\mfa \in \DGA^+_k $ denote its universal enveloping algebra.
There is an isomorphism of graded commutative $k$-algebras 
$$
{\rm HS}_\ast(U\mfa) \,\cong\, \Lambda_k[\rH_{\ast+1}(\mfa;k)]\,, 
$$
where $\rH_\ast(\mfa;k)$ is the $($reduced$)$ Chevalley-Eilenberg homology of $\mfa$ with trivial coefficients.
\end{theorem}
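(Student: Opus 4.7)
The plan is to reduce, via Proposition~\ref{tA}, to computing the one-dimensional representation homology $\HR_\ast(U\mfa, k)$, and then to exploit the classical Koszul duality between the operads $\Lie$ and $\Com$ in order to identify the result with $\Lambda_k[\rH_{\ast+1}(\mfa, k)]$.

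First, by Proposition~\ref{tA} it suffices to establish a natural isomorphism $\HR_\ast(U\mfa, k) \cong \Lambda_k[\rH_{\ast+1}(\mfa,k)]$ of graded commutative $k$-algebras. To this end, I would fix a semi-free resolution $L \xrightarrow{\sim} \mfa$ in $\DGL_k^+$, where $L = \cL(V)$ is a free graded Lie algebra on a graded vector space $V$ equipped with a compatible differential. The universal enveloping algebra functor $U : \DGL_k^+ \to \DGA_k^+$ is left Quillen (its right adjoint sends an associative DG algebra to its underlying DG Lie algebra with commutator bracket), so $UL \cong T(V) \xrightarrow{\sim} U\mfa$ is a semi-free resolution in $\DGA_k^+$. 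By the definition of representation homology for DG algebras (see \cite{BKR}), this gives
$$\HR_\ast(U\mfa, k) \,\cong\, \pi_\ast\bigl[(UL)_{\rm ab}\bigr].$$

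Next, a direct calculation using the universal property of $U$ yields a natural isomorphism of commutative DG algebras $(UL)_{\rm ab} \cong \Sym_k(L_{\rm ab})$, where $L_{\rm ab} := L/[L,L]$ denotes the abelianization in DG Lie algebras. The crucial point is then to identify $\H_\ast(L_{\rm ab}) \cong \rH_{\ast+1}(\mfa, k)$. For this I would invoke Lie--commutative Koszul duality: the counit $\Omega_{\rm Lie}(\bar{C}^{\rm CE}(\mfa)) \xrightarrow{\sim} \mfa$ of the bar-cobar adjunction between DG Lie algebras and reduced cocommutative DG coalgebras provides a canonical cofibrant model of $\mfa$, and a short direct inspection shows that its Lie abelianization is the desuspended reduced Chevalley-Eilenberg complex $\bar{C}^{\rm CE}(\mfa)[-1]$ equipped with the CE differential. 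Taking homology, and using the fact that any two cofibrant resolutions produce quasi-isomorphic abelianizations, one obtains $\H_n(L_{\rm ab}) \cong \rH_{n+1}(\mfa, k)$ for all $n \geq 0$.

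Finally, since $k$ has characteristic zero, the symmetric algebra functor $\Sym_k$ preserves quasi-isomorphisms of bounded-below chain complexes and satisfies $\H_\ast[\Sym_k(W)] \cong \Lambda_k[\H_\ast(W)]$. Applying this to $W = L_{\rm ab}$ will produce the desired isomorphism of graded commutative $k$-algebras; Theorem~\ref{TPoly} then follows instantly by specialising to an abelian Lie algebra $\mfa = V$, for which $\rH_j(V, k) = \wedge^j V$ with $j \geq 1$. The main obstacle will be the Koszul-duality step identifying $L_{\rm ab}$ with $\bar{C}^{\rm CE}(\mfa)[-1]$: although classical, it requires some care with gradings and signs, and one must additionally verify compatibility of the resulting algebra structure with the one on $\HS_\ast$ transported through Proposition~\ref{factorTr}. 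I expect this identification, or a suitable variant of it, to be extractable from \cite{BFPRW}, where the analogous comparison is carried out via Harish-Chandra in the higher-dimensional (representation into $\gl_n$) setting.
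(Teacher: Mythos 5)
Your proposal is essentially the paper's argument, reorganized to factor through the Lie-algebra side. Both reduce to $\HR_\ast(U\mfa,k)$ via Proposition~\ref{tA}, both ultimately use the cobar of the Chevalley--Eilenberg coalgebra as a cofibrant model, and both rest on the same key observation: the Koszul-duality part of the cobar differential lands in decomposables, so it vanishes after abelianization. The paper works directly with $\cb(C)$ for $C=\cC_\ast(\mfa;k)$ and notes that cocommutativity of $C$ forces $d_2(\bar C[-1]) \subset \wedge^2(\bar C[-1])$, so $\cb(C)_{\rm ab}\cong\Lambda_k(\bar C[-1])$ with only the internal differential $d_1$ surviving. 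Your route first records the natural isomorphism $(UL)_{\rm ab}\cong\Sym_k(L_{\rm ab})$ (valid for any Lie algebra, being a coincidence of left adjoints), then invokes the identification of derived Lie abelianization with shifted reduced CE homology; specialized to $L=\Omega_{\rm Lie}\bar C^{\rm CE}(\mfa)$ this is exactly the paper's computation. One thing you worry about that you need not: the compatibility of algebra structures with the $\HS_\ast$ side is already part of the statement of Proposition~\ref{tA}, so once you have an algebra isomorphism $\HR_\ast(U\mfa,k)\cong\Lambda_k[\rH_{\ast+1}(\mfa;k)]$ nothing further needs checking. Also worth stating explicitly, as the paper does, is that passing $\Lambda_k$ through homology requires $\mathrm{char}\,k=0$, which you do note.
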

\begin{proof}
Recall (see, e.g., \cite[Sect. 6.2]{BFPRW}) that $U\mfa$ is Koszul dual to the cocommutative DG coalgebra $C:=\cC_\ast(\mfa;k)$, whose underlying chain complex is the Chevalley-Eilenberg complex of $\mfa$. This means that
the cobar construction $\cb(C) \stackrel{\sim}{\to} U\mfa$ yields a cofibrant DG algebra resolution of $U\mfa$ (see, e.g, \cite[Chap. 2]{LVO}). Recall that $\cb(C)$ is defined to be the tensor algebra $T_k(\bar{C}[-1])$ with differential being the sum of two degree $(-1)$ derivations: $\,d = d_1+d_2$. On generators, $d_1$ is defined to be the differential of $\bar{C}[-1]$, while $d_2$ is given on  $\bar{C}[-1]$  by the following composite map
$$\begin{diagram}[small] \bar{C}[-1] ] \cong k[-1] \otimes \bar{C} & \rTo^{\Delta_s \otimes \Delta} & k[-1] \otimes k[-1] \otimes \bar{C}^{\otimes 2} & \rTo^{\cong} & (k[-1] \otimes \bar{C})^{\otimes 2} \cong (\bar{C}[-1])^{\otimes 2} \subset T_k(\bar{C}[-1]) \end{diagram}\,,
$$
where $\Delta$ is the coproduct on $\bar{C}$ and $\,\Delta_s:k[-1] \to k[-1] \otimes k[-1]\,$ is the linear map taking $1_{k[-1]}$ to $-1_{k[-1]} \otimes 1_{k[-1]}$. 
Since $\bar{C}$ is cocommutative, $d_2$, in fact, maps $\bar{C}[-1]$ to $\wedge^2(\bar{C}[-1]) \subset \cb(C)$. Hence, 
\begin{equation} \la{cbab} 
\cb(C)_{\rm ab} \,\cong\, \Lambda_k(\bar{C}[-1])\,,\end{equation}
with the differential in the right hand side induced by $d_1=d_{\bar{C}[-1]}$. 

Now, we have the following chain of natural isomorphisms
\begin{equation}
\la{isoLie}
{\rm HS}_\ast(U\mfa) \,\cong\, \HR_\ast(U\mfa,k) \,\cong\, \H_\ast[\cb(C)_{\rm ab}] \,\cong\, \H_\ast[\Lambda_k(\bar{C}[-1])] \,\cong\, 
\Lambda_k[\rH_{\ast+1}(\mfa;k)]\,, 
\end{equation}
The first isomorphism in \eqref{isoLie} follows from Proposition \ref{tA}, the second is by  definition of representation homology for (non-negatively graded) DG algebras (see \cite[Sect. 2]{BKR}), the third is induced by \eqref{cbab}, and the last one is a consequence of the well-known fact that the symmteric algebra functor $\Lambda_k$ commutes with homology when $k$ is a field of characteristic $0$ (see, e.g. \cite[Appendix B, Prop. 2.1]{Q}).
\end{proof}
Note that if $ V $ is an $n$-dimensional vector space over $k$, we can think of it
as the Lie algebra $ \mfa = V $ with trivial bracket. Then $U\mfa = S_k(V) $ and
$$ 
\rH_i(\mfa;k) \,\cong\, \begin{cases}
                         \wedge^i V & 1 \leqslant i \leqslant n\\
                           0           & \text{ otherwise }
                       \end{cases}
$$                      
Hence, by Theorem \ref{hsua}, we have
\begin{equation}
\la{HSpoly}
{\rm HS}_\ast[S_k(V)] \,\cong\, \Lambda_k\big[ \Moplus_{i=1}^n \wedge^i V [i-1] \big] \,\cong\, \Motimes_{i=1}^n \Lambda_k(\wedge^i V[i-1])\,, 
\end{equation}
which is the result of Theorem~\ref{TPoly}.
We can now use the algebraic formula \eqref{HSpoly} to verify the Ault-Fiedorowicz topological formula for symmetric homology (see Conjecture~\ref{Con1} in the Introduction).
\begin{corollary} \la{hspoly}
If $k$ is a field of characteristic $0$, then there is an isomorphism
\begin{equation}
\la{Eq11}
\H_\ast\bigg(\prod_{i=1}^n \cC_{\infty}(\bS^0) \times \prod_{i=2}^n \, \Omega^{\infty}\Sigma^{\infty}(\bS^{i-1})^{\binom{n}{i}}\, ,\, k\bigg)
\,\cong\, \HS_\ast(k[x_1,\ldots,x_n])\ ,\quad \forall\,n \ge 1\,.
\end{equation}
\end{corollary}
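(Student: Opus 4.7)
The plan is to match two graded commutative algebras: the algebraic expression for $\HS_\ast(k[x_1,\ldots,x_n])$ provided by Theorem~\ref{TPoly}, and the rational homology of the topological space on the left-hand side of \eqref{Eq11} computed via the Künneth formula.

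First I would apply Theorem~\ref{TPoly} with $V = k\langle x_1,\ldots,x_n\rangle$ to obtain
$$\HS_\ast(k[x_1,\ldots,x_n]) \;\cong\; \Motimes_{i=1}^n \Lambda_k\!\big(\wedge^i V[i-1]\big).$$
Unpacking the definition of $\Lambda_k$ on a graded vector space, the $i=1$ tensor factor is a polynomial algebra on $n$ generators in degree $0$, while for each $i \geq 2$ the factor is a polynomial (respectively exterior) algebra on $\binom{n}{i} = \dim_k \wedge^i V$ generators in degree $i-1$ according as $i-1$ is even (respectively odd).

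Next I would compute the rational homology of the topological side using Künneth and two standard facts. First, $\cC_\infty(\bS^0) \simeq \coprod_{m \geq 0} B\Sigma_m$, so the rational vanishing of the homology of symmetric groups in positive degrees gives $\H_\ast(\cC_\infty(\bS^0); k) \cong k[x]$ concentrated in degree zero as a ring under the Pontryagin product; thus $\prod_{i=1}^n \cC_\infty(\bS^0)$ contributes the polynomial algebra $k[x_1,\ldots,x_n]$ in degree $0$. Second, for $m\geq 1$ the space $\Omega^\infty\Sigma^\infty \bS^m$ is rationally equivalent to the Eilenberg--MacLane space $K(\Q,m)$, since its rational homotopy groups are the stable homotopy groups of $\bS^m$ tensored with $\Q$, which are concentrated in degree $m$. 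Consequently $\H_\ast(\Omega^\infty\Sigma^\infty \bS^m; k) \cong \Lambda_k(u_m)$ with $|u_m|=m$, polynomial when $m$ is even and exterior when $m$ is odd. Taking $m = i-1$ and a $\binom{n}{i}$-fold product matches exactly, tensor factor by tensor factor, the algebraic expression from the previous paragraph.

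Assembling these computations via Künneth gives the desired isomorphism. There is no substantive obstacle beyond Theorem~\ref{TPoly} itself; the only care required is the parity bookkeeping that aligns the polynomial/exterior dichotomy on the topological side with the convention for $\Lambda_k$ on a graded vector space used on the algebraic side, together with the identification of $\binom{n}{i}$ generators with $\dim_k \wedge^i V$.
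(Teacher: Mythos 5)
Your proposal is correct and follows essentially the same route as the paper: apply Theorem~\ref{TPoly} (equivalently formula~\eqref{HSpoly}), compute the rational homology of the topological side factor-by-factor via K\"unneth, and match. The only cosmetic differences are in how you verify the two standard topological facts: for $\cC_\infty(\bS^0)$ you invoke $\cC_\infty(\bS^0)\simeq\coprod_{m\ge 0}B\Sigma_m$ directly, whereas the paper passes through the $E_\infty$-monad $D$ of \eqref{DV} (these compute the same homotopy orbit spaces in char $0$); and for $\Omega^\infty\Sigma^\infty\bS^{i-1}$ you argue via Serre's theorem on rational stable stems that the space is rationally $K(\Q,i-1)$, while the paper cites the rational equivalence with $\SP^\infty(\bS^{i-1})=K(\Z,i-1)$ — again the same content. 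The parity bookkeeping identifying the polynomial/exterior dichotomy with the $\Lambda_k$ convention, and $\dim_k\wedge^i V=\binom{n}{i}$, is exactly what the paper does.
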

\begin{proof} 
Since $\cC_\infty$ is an $E_\infty$-operad, and the monads associated to different $E_\infty$-operads are equivalent (see \cite{May72}), we can use the monad \eqref{DV} to compute the homology of $\cC_\infty(\bS^0)$: specifically, writing $\bS^0 = \{\ast,x\}$ as a discrete two-point space, we can identify
$$
\H_\ast(\cC_\infty(\bS^0),\,k) \,\cong\, \H_\ast[\Moplus_{n=0}^{\infty} k[E_\ast \Sigma_n^{\rm op}] \otimes_{k[\Sigma_n]} (k \cdot x)^{\otimes n}]\, \cong\, k[x]\,, 
$$
where the last isomorphism holds thanks to the fact that $k$ is a field of characteristic $0$. Then, by K\"{u}nneth's Theorem, 
\begin{equation}
\la{cso}
\H_\ast(\cC_\infty(\bS^0)\,\times\, \stackrel{n}{\ldots} \,\times\, \cC_\infty(\bS^0),\ k) \,\cong\, \Sym_k(V) \,= \,\Lambda_k(V)\,.
\end{equation}
where $V $ is the $k$-vector space spanned by the set $\{x_1, x_2, \ldots, x_n\}$
of zero-dimensional cycles corresponding to $n$ different copies of $\bS^0$.
On the other hand,
\begin{equation} 
\la{omspinf}
\H_\ast(\Omega^{\infty}\Sigma^{\infty}(\bS^{i-1}), \, k) \,\cong\, 
\H_\ast(\SP^{\infty}(\bS^{i-1}),\,k)\, \cong\, \Lambda_k(k[i-1])\,,
\end{equation}
where again the first isomorphism holds thanks
to the fact that $k$ is a field of characteristic $0$ (see, e.g., \cite[Sect. 7.3, Remark, p. 565]{CM95} and the second 
because ${\rm SP}^{\infty}(\bS^{i-1})$ is a $K(i-1,\Z)$-space. It follows from \eqref{omspinf} that there is a (non-canonical) isomorphism
\begin{equation}\la{cso1}
\H_\ast[\Omega^{\infty}\Sigma^{\infty}(\bS^{i-1})\,\times \stackrel{{\binom{n}{i}}}{\ldots}\,\times 
\, \Omega^{\infty}\Sigma^{\infty}(\bS^{i-1}),\,k]\,  \cong\, \Lambda_k(\wedge^i V[i-1])
\end{equation}
Now, combining \eqref{cso} and \eqref{cso1}, we conclude by K\"{u}nneth's Theorem: 
$$
\H_\ast\bigg(\prod_{i=1}^n \cC_{\infty}(\bS^0) \times \prod_{i=2}^n \, \Omega^{\infty}\Sigma^{\infty}(\bS^{i-1})^{\binom{n}{i}}\, ,\, k\bigg)\ \cong\ 
\Lambda_k(V) \otimes \,
\Motimes_{i=2}^n 
\Lambda_k(\wedge^i V[i-1])\ \cong\
\Motimes_{i=1}^n 
\Lambda_k(\wedge^i V[i-1])\,,
$$
Comparing with formula \eqref{HSpoly} shows that this last homology is indeed isomorphic
to the symmetric homology of the polynomial algebra in $n$ variables.
This completes the proof of the corollary.
\end{proof}
\subsection*{Acknowledgments} The work of the first author was partially supported by NSF grant DMS 1702372 and the Simons Collaboration Grant 712995. The second author was partially supported by NSF grant DMS 1702323.  


\bibliography{derivedchar_bibtex}{}
\bibliographystyle{plain}
\end{document}